\let\phi\varphi
\newcommand{\ze}{\zeta}
\newcommand{\bdet}{\textbf{det}}
\newtheorem{theorem}{Theorem}[section]
\newtheorem{proposition}[theorem]{Proposition}
\newtheorem{corollary}[theorem]{Corollary}
\newtheorem{thm-and-def}[theorem]{Theorem and Definition}
\theoremstyle{definition}
\newtheorem{remark}[theorem]{Remark}
\newtheorem{notation-and-rem}[theorem]{Notation and Remark}
\newcommand{\bp}{\boldsymbol{p}}
\newcommand{\bD}{\boldsymbol{D}}
\begin{document}
\title{Geometry of solutions to the c-projective metrizability equation}

\author{Keegan J. Flood and A. Rod Gover}

\address{K.J. F.:Department of Mathematics and Statistics\\
  Masaryk University\\
  Kotla\'{a}\v{r}sk\'{a} 2\\
  Brno 611-37\\
  Czech Republic}
  \email{flood@math.muni.cz}
\address{A.R.G.:Department of Mathematics\\
  The University of Auckland\\
  Private Bag 92019\\
  Auckland 1142\\
  New Zealand} 
\email{r.gover@auckland.ac.nz}

\begin{abstract} 
On an almost complex manifold, a quasi-K\"{a}hler metric, with canonical connection in the
c-projective class of a given minimal complex connection, is
equivalent to a non-degenerate solution of the c-projectively invariant
metrizability equation. For this overdetermined equation, replacing this maximal rank condition on solutions with a nondegeneracy condition on the prolonged system yields a strictly wider class of solutions with non-vanishing (generalized) scalar curvature. We study the geometries induced by this class of solutions. For each solution, the strict point-wise signature partitions the underlying manifold into strata, in a manner that generalizes the model, a certain Lie group orbit decomposition of $\mathbb{CP}^m$. We describe the smooth nature and geometric structure of each strata component, generalizing the geometries of the embedded orbits in the model. This includes a quasi-K\"{a}hler metric on the open strata components that becomes singular at the strata boundary. The closed strata inherit almost CR-structures and can be viewed as a c-projective infinity for the given quasi-K\"{a}hler metric.
\end{abstract}

\subjclass[2010]{Primary 32J05, 32Q60, 53B15, 53B35, 53C21, 53C55; Secondary 32J27, 53C15, 53A20, 53C25, 53B10, 35N10, 58J60}

\thanks{K. J. Flood gratefully acknowledges support from the Czech Science Foundation (GA\v{C}R) Grant 20-11473S and A. R. Gover gratefully acknowledges support from the Royal
  Society of New Zealand via Marsden Grants 16-UOA-051 and 19-UOA-008.}

\maketitle

\section{Introduction}\label{intro}

Given a smooth manifold $M$, a {\em projective structure} is an
equivalence class of torsion-free affine connections $\bp$ that have
the same geodesics as unparametrized curves. A {\em projective
  manifold} is a smooth manifold equipped with a projective class
$(M,\bp)$.  A natural question is whether such a structure is
metrizable, i.e., is there is a metric $g$ on $M$ whose Levi-Civita
connection $\nabla^{g}$ lies in the projective class $\bp$.  By
\cite{Mik,Sinj} this non-linear problem can instead be rephrased in
terms of solutions to the projectively invariant linear PDE
\begin{equation}\label{meteq}
\operatorname{trace-free} (\nabla_a \zeta^{bc}) = 0,
\end{equation}
where we employ the Penrose abstract index notation and $\nabla\in
\bp$.  The projective manifold is metrizable if and only if there is a
non-degenerate symmetric contravariant 2-tensor $\zeta^{bc}$
satisfying \eqref{meteq}, with inverse of the metric given by
$g^{bc}=\operatorname{sgn}(\tau) \tau \zeta^{bc}$, where $\tau$ is a
suitable determinant of $\ze$.  The study of this equation and related
topics has led to considerable recent progress \cite{BM, BDE, DE,
  EM, FM, GM, KM, Mat2, MR, Mettler}. There is growing interest in an
analogue in the setting of what is called c-projective geometry, see
e.g. \cite{CEMN, CG4, Mettler2}, and this is  what we take up here.

On an almost complex $2m$-manifold $(M,J)$, an \emph{almost c-projective
structure} is an equivalence class of affine connections $\bD$ which
preserve $J$, which have minimal torsion in the sense that the only
non-vanishing component of their torsion is the Nijenhuis tensor
$N^J$, and which have (up to reparametrization) the same $J$-planar
curves (a complex analogue of geodesics).  Here and throughout $m\geq
2$.  The analogue of metrizability, in the almost c-projective
setting, then, is whether there is exists a Hermitian metric on $M$
which is preserved by a connection in the c-projective class $\bD$.
Equivalently, an almost c-projective manifold is metrizable if there
exists a non-degenerate solution to the c-projectively invariant
linear PDE
\begin{equation}
\operatorname{trace-free} (\nabla_a \zeta^{bc}) = 0,
\end{equation}
where this is a complex trace and $\nabla\in \bD$. 
Explicitly, in real terms, this is given by
\begin{align}\label{cproj-meteq}
\nabla_{c} \zeta^{ab} - \frac{1}{m}\delta_{c}^{(a}\nabla_{d}\zeta^{b)d} - \frac{1}{m}J_{c}^{(b}J^{a)}_e\nabla_{d}\ze^{ed} = 0.
\end{align}
where $\zeta$ is a density weighted Hermitian form on $T^*M$.
Equation \eqref{cproj-meteq} is termed the {\em c-projective metrizability
  equation}.  The inverse metric on $M$ is then given by
$$g^{bc}=
\operatorname{sgn}(\operatorname{det}(\ze))
\operatorname{det}(\ze)\zeta^{bc},
$$
where, again, a suitable notion of determinant is involved.

Here we are interested in more general solutions to this c-projective metrizability equation \eqref{cproj-meteq}. In
particular, we obtain a result that extends, to generic solutions, a
result from \cite{CGH2} concerning the restricted class of so-called
{\em normal solutions}. Specifically our aim is to identify and
understand the smooth structure and geometry induced on the different
sets where a solution to \eqref{cproj-meteq} is non-degenerate and,
respectively, degenerate - we shall extend the terminology from
\cite{CGH2} and call this a {\em curved orbit decomposition}.  At
points where the solution is non-degenerate it induces a metric as
previously noted. But at points where a solution is degenerate there
is not, in general, a metric, since the metric becomes singular on
this set.  But, under suitable assumptions, the degeneracy locus of
the solution does inherit a rich geometric structure.  In particular,
it has a hypersurface type CR structure, for which the Levi-form
(arising as usual for an embedded hypersurface) can be seen to be
compatible with the metric defined away from the degeneracy locus of
the solution.  This work gives an alternative approach to the
c-projective compactification of complete non-compact
pseudo-Hermitian metrics, as developed and studied in \cite{CG4}.  The
problem we address is a special case of a more general phenomenon
which we describe below.

Natural overdetermined partial differential equations govern a huge
variety of geometric structures \cite{BMMR,CG2,CurGov,EM,GM,Jez,Pap}
on smooth manifolds.  It has long been known that features of
solutions to such equations can partition the manifold
\cite{BL,Derd,Leit2,Lis}, but only recently tools have been developed
for fully understanding the geometries on the more singular components
in a way that relates them to the ambient structures. In fact the components of
the partition can appear radically different to each other, but the link between them becomes clear when viewing them
via prolongations of the solution to the relevant geometric PDE, see e.g \cite{GNW,GPW}.
A reason that this is important is that one can exploit these
relationships to smoothly relate the distinct components of the
partition and thus study the geometry on one component by means of an
adjacent component, as seen in the geometric holography program
e.g.\ \cite{AGW,FG,GLW,GZ,Juh,MM}.

Hence, given a solution to an
overdetermined partial differential equation on a smooth manifold, the
key problem is to determine the basic data of the components of
the partition (e.g. are they smoothly embedded submanifolds of some
dimension or rather more complicated variety type structures?), then
to determine the geometric structures thereon. Finally, one wants to
usefully understand the relationship between the geometric structures on
neighboring components of the partition.

It turns out that for a broad class of natural overdetermined linear
partial differential equations, and then a class of solutions to these equations ,
one can obtain remarkably general results.  These are for what are
called {\em normal solutions} to {\em first BGG equations} . In
\cite{CGH1,CGH2} it is shown that the stratifications arising from
solutions to these must be locally diffeomorphic to stratifications
arising from group orbit decompositions of homogeneous model
geometries. Moreover, the components of the partition carry Cartan
geometries that are curved analogues of the homogeneous geometries on
the corresponding partition of components of the model.  Unfortunately,
the methods utilized in these sources applies only to solutions which
correspond to Cartan holonomy reductions.  Thus it is important to
establish to what extent similar results might be deduced, by
different methods, for more general solutions. This question is
treated for the equation \eqref{cprojmeteq} (which is an
example of a first BGG equation) in the present article, following to
an extent the ideas and the progress in \cite{FloodGov,GPW}.

A standard approach to studying and treating overdetermined equations
is via differential prolongation, see e.g.\ \cite{BCEG}. The
c-projective tractor calculus (cf. \cite{CEMN,CG4}) is a natural tool
for developing and organizing the prolonged system of the
c-projectively invariant equation (\ref{cprojmeteq}). One reason for
this is that, since it is a first BGG equation
\cite{CGH1,CGH2,CSS,CSouc}, the (first) BGG splitting operator (a
canonical invariant differential operator) maps, loosely speaking, a
potential solution of \eqref{cprojmeteq} to its prolonged variable system. We
denote this c-projectively invariant second order operator
$\zeta\mapsto L(\zeta)$, where $L(\zeta)$ takes values in the bundle
$\mathcal{H}^*$ of Hermitian forms on the standard c-projective
cotractor bundle. If $L(\zeta)$ is parallel for the tractor
connection then  the solution $\zeta$ is said to be {\em normal}, but
we consider a more general class of solutions here. This BGG machinery
is introduced in Section \ref{cprojbgg} below.

There is a canonical
(c-projectively invariant) real-valued determinant on sections of $\mathcal{H}^*$ so we
consider the composite map
\begin{equation}\label{det}
\zeta\mapsto L(\zeta)\mapsto \det (L(\zeta)),
\end{equation}
which takes a solution $\zeta$ of \eqref{cprojmeteq} to the determinant of its prolonged system.
If $\zeta$ is a non-degenerate solution of
(\ref{cprojmeteq}) then, up a non-zero constant, $\det (L(\zeta))$ is the scalar curvature of the metric $g$
with inverse $g^{-1}=\operatorname{sgn}(\tau) \tau \zeta$
\cite{CG4}, where $\tau$ a suitable determinant of $\zeta$. 
But (\ref{det}) is well-defined even
where $\zeta$ is degenerate. 
Thus it is natural to consider
solutions $\zeta$ of equation (\ref{meteq}) satisfying the
condition that $\det (L(\zeta))$ is nowhere zero, i.e. with $L(\zeta)$
non-degenerate, but with no {\em a priori} restriction on the rank of
$\zeta$. 
This generic condition is a
generalization of constant scalar curvature, where $\zeta$ can have a non-empty degeneracy locus. 
Such considerations lead to the following result.

\begin{theorem}\label{one}
Let $(M,J,\bD)$ be a almost c-projective manifold with real dimension $2m$ equipped with a solution $\ze \in \Gamma (\operatorname{Herm}(T^*M)\otimes \mathcal{E}(-1,-1)_{\mathbb{R}})$ of the metrizability equation such that $L(\ze)\in\Gamma(\mathcal{H}^*)$ is non-degenerate as a pseudo-Hermitian form on the cotractor bundle. If $L(\ze)$ is definite then the degeneracy locus $\mathcal{D}(\ze)$ is empty and $(M, J, \bD,\zeta)$ is
  a quasi-K\"{a}hler manifold with inverse Hermitian metric $g^{-1}=\operatorname{sgn}(\tau)\tau
  \ze$ where $\tau={\normalfont \bdet}(\ze)$. If $L(\ze)$ has signature $(p+1,q+1)$, with $p,q\geq 0$, then $\mathcal{D}(\ze)$ is either empty or it is a smoothly embedded separating real hypersurface such that the following hold: 
 \begin{enumerate}[(i)]
\item $M$ is stratified by the strict signature of $\ze$ as a (density
  weighted) Hermitian form on $T^*M$ with curved orbit decomposition given by
$$
M=  \coprod\limits_{i \in \{+,0,-\}} M_i
$$
 where $\zeta$ has signature $(p+1,q)$,
$(p,q+1)$,and $(p,q,1)$ on $TM$ restricted to $M_{+}$, $M_{-}$, and $M_0$, respectively. 
\item On $M_\pm$, $\zeta$ induces a quasi-K\"{a}hler metric $g_{\pm}$ with nonvanishing scalar curvature $R^{g_{\pm}}$, with the same signature as $\zeta$, with inverse $g^{-1}_{\pm} = \operatorname{sgn}(\tau)\tau \zeta|_{M_{\pm}}$ where $\tau={\normalfont \bdet}(\ze)$. 
\item If $M$ is closed, then the components $(M \backslash M_{\mp},J,\bD)$ are 
c-projective compactifications of $(M_{\pm}, J, \nabla^{\ze})$, with boundary $M_0$.
\item $M_0$ inherits a signature $(p,q)$ almost CR structure of hypersurface type.
\end{enumerate}
\end{theorem}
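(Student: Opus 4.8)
The plan is to reduce the global statement to pointwise linear algebra on the cotractor bundle together with a single transversality computation. Since $L(\ze)$ is a Hermitian form of constant signature on the rank-$(m+1)$ complex cotractor bundle $\mathcal{T}^*$, and since in the composition series of $\mathcal{T}^*$ the density-weighted cotangent bundle sits as a canonical corank-one subbundle $W=\ker(\mathcal{T}^*\to\mathcal{E}(1,0))$, the metrizability tensor $\ze$ is recovered as the projecting slot $L(\ze)|_{W}$. Thus the pointwise signature of $\ze$ is that of a non-degenerate Hermitian form of type $(p+1,q+1)$ restricted to a complex hyperplane. By Sylvester's law of inertia the outcome is governed entirely by the $H$-type of the line $W^{\perp}$: if $W^\perp$ is negative one gets $(p+1,q)$, if positive $(p,q+1)$, and if null (equivalently $W^\perp\subset W$) the restriction is degenerate with one-dimensional kernel and residual signature $(p,q)$. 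This reproduces the three strict signatures and identifies $\mathcal{D}(\ze)=\{x: W^\perp_x\text{ is }H\text{-null}\}$. When $L(\ze)$ is definite no hyperplane restriction can degenerate, so $\mathcal{D}(\ze)=\varnothing$ and $\ze$ is everywhere non-degenerate; this disposes of the first assertion and leaves the genuinely indefinite case $(p+1,q+1)$, $p,q\ge 0$, for the remaining claims.

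The crux, and the step I expect to be the main obstacle, is to promote this pointwise picture to the assertion that $\mathcal{D}(\ze)$ is a smoothly embedded separating hypersurface. The plan is to use the invariant determinant $\tau=\bdet(\ze)$ as a defining function, so $\mathcal{D}(\ze)=\{\tau=0\}$, and to show $d\tau\ne 0$ along this set. Choosing a scale realizes $L(\ze)$ as a Hermitian block matrix with diagonal blocks $\ze$ and a scalar density $d$, and off-diagonal block $\mu$ encoding $\nabla\ze$ through the BGG splitting operator; the Schur-complement expansion then gives $\det L(\ze)=d\,\tau-\langle\mu,\operatorname{adj}(\ze)\,\mu\rangle$. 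Along $\mathcal{D}(\ze)$ the tensor $\ze$ has rank $m-1$, so $\operatorname{adj}(\ze)$ has rank one and equals $c\,v\otimes\bar v$ for $v$ spanning $\ker\ze$; the hypothesis $\det L(\ze)\ne 0$ then forces $\langle\mu,v\rangle\ne 0$. Since $d\tau(Y)=\operatorname{tr}(\operatorname{adj}(\ze)\,\nabla_Y\ze)$, which on $\mathcal{D}(\ze)$ reduces to $c\,\bar v(\nabla_Y\ze)v$, the same non-vanishing shows $d\tau\ne 0$. The delicate point to handle here is that the off-diagonal slot delivered by the splitting operator is a trace-adjusted derivative rather than $\nabla\ze$ itself, so I will invoke the metrizability equation (the trace-free condition) to guarantee that the surviving piece of the contraction with $v$ is exactly $\mu$, and I will track density weights to make $d\tau$ a genuine conormal. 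Separation is then immediate: across $\mathcal{D}(\ze)$ the two open strata carry signatures $(p+1,q)$ and $(p,q+1)$, so $\tau$ changes sign and $M\setminus\mathcal{D}(\ze)=M_+\sqcup M_-$.

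The geometry of the strata then follows by combining this transversality with results already quoted. On each open stratum $\ze$ is non-degenerate, so the non-degenerate metrizability correspondence and \cite{CG4} yield the quasi-K\"ahler metric $g_\pm^{-1}=\operatorname{sgn}(\tau)\tau\,\ze|_{M_\pm}$ of the stated signature, while the same reference identifies $\det L(\ze)$ with the scalar curvature up to a nonzero constant, nowhere zero by hypothesis; this gives (ii). For (iv), $M_0=\mathcal{D}(\ze)$ is a real hypersurface in $(M,J)$, so $H:=TM_0\cap J(TM_0)$ is its canonical rank-$(2m-2)$ almost CR distribution; I will identify $\ker\ze$ with the $J$-image of the conormal and verify that $\ze$ descends to $H$ as a Hermitian form whose class is the Levi form, of signature $(p,q)$ by the residual computation of the first paragraph, producing the hypersurface-type almost CR structure compatible with $g_\pm$. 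Finally for (iii): with $M$ closed and $\tau$ a genuine defining function for the smooth boundary $M_0$, the relation $g_\pm=\operatorname{sgn}(\tau)\tau\,\ze$ produces exactly the controlled blow-up of the metric and the smooth extension of $\bD$ across $M_0$ demanded by the definition of c-projective compactification in \cite{CG4}, exhibiting $(M\setminus M_\mp,J,\bD)$ as the compactification of $(M_\pm,J,\nabla^{\ze})$ with boundary $M_0$. Throughout, the flat model, namely the $SU(p{+}1,q{+}1)$-orbit decomposition of $\mathbb{CP}^m$ into two open quadric-complements and the closed null quadric bearing the standard CR hypersurface, serves as the guide for recognizing each stratum and its induced structure.
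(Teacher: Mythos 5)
Your strategy for the stratification and the hypersurface claim is correct, and for part (i) it is genuinely different from the paper's. The paper works with the pointwise inverse $\Phi_{\mathscr{AB}}=(L(\ze)^{-1})_{\mathscr{AB}}$, differentiates the identity \eqref{kroneckerdelta} with the tractor connection, and reads off from the slots that on $\mathcal{D}(\ze)$ the middle slot of $\Phi$ equals $\tfrac12\nabla\hat\tau$; non-degeneracy of $\Phi$ then forbids $\hat\tau$ and $\nabla\hat\tau$ from vanishing simultaneously, which gives the embedded separating hypersurface. You instead work directly with $h=L(\ze)$: Sylvester's law of inertia applied to the restriction of $h$ to the canonical hyperplane subbundle $W=T^*M\otimes\mathcal{E}(1,0)\subset\mathcal{T}^*$ (this is where the signature list in (i) comes from, a point the paper leaves implicit), and then a Schur-complement expansion of $\det L(\ze)$ together with Jacobi's formula and the metrizability equation \eqref{cprojmeteq} to get $d\tau\neq 0$ along $\mathcal{D}(\ze)$. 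Your kernel argument (if $v\in\ker\ze$ and $\langle\mu,v\rangle=0$ then $(v,0)\in\ker L(\ze)$) is sound, as is the reduction of $\bar v(\nabla_Y\ze)v$ to a multiple of $\langle\mu,v\rangle$ using \eqref{cprojmeteq}; this buys a more elementary, self-contained (i) that makes the pointwise linear algebra explicit. Parts (ii) and (iii) then run essentially as in the paper; in (iii) you should state explicitly that $\tau=\bdet(\ze)$ is parallel for $\nabla^{\ze}$, since ``defining density plus parallel'' is exactly the hypothesis of Proposition \ref{cprojcompactnessprop}, rather than appealing loosely to the blow-up behaviour of the metric.

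There is, however, a genuine gap in your treatment of (iv). Sylvester's law gives that $\ze$ descends to a non-degenerate Hermitian form of signature $(p,q)$ on $T^*M|_{M_0}/\ker\ze\cong H^*$, and your adjugate computation does identify $\ker\ze$ with the span of $\nabla\tau$ and $J\nabla\tau$. But the signature asserted in (iv) is that of the Levi form of the induced CR structure, which is a second-order object: with $\theta_a=J_a^i\nabla_i\hat\tau$, the Levi form is the restriction to $H$ of $d\theta$, built from $\nabla\nabla\hat\tau$. The claim that this second-order tensor agrees (up to $J$ and inversion) with the zeroth-order tensor $\ze|_H$ is precisely the hard content of (iv); it does not follow from the ``residual computation of the first paragraph,'' and your proposal supplies no mechanism for proving it. In the paper this is where most of the work lies: one constructs a special boundary scale in which $Y_{\mathscr{A}}$ is null along $\mathcal{D}(\ze)$, expands the slots of \eqref{kroneckerdelta} in that scale to obtain the system \eqref{firstone}--\eqref{fourthone}, and equation \eqref{fourthone} then yields $(\ze|_H)^{-1}=\tfrac12\nabla\nabla\hat\tau|_H$, giving Levi non-degeneracy with signature $(p,q)$ and identifying the Reeb field. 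In your framework the analogous step would be to differentiate Jacobi's identity $\nabla_c\tau=\operatorname{tr}(\operatorname{adj}(\ze)\,\nabla_c\ze)$ once more and control the deformation terms coming from the non-normal part of $\nabla^{\mathcal{T}}L(\ze)$ after contracting with directions in $H$ (where the terms proportional to $\nabla_a\hat\tau$ die); this can be done, but it is substantive work that must actually be carried out, not a routine verification.
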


A smoothly embedded submanifold of real codimension 1 will be referred to as a hypersurface.
Note that each of the components $M_+$,
$M_0$, and $ M_{-}$ in the above theorem need not be
connected. 
We denote the
signature of a real symmetric bilinear form by $(p,q,r)$, where $p,q$
and $r$ are the number, counting multiplicity, of positive, negative,
and zero eigenvalues, respectively, of any matrix representing the
form once a basis has been chosen. 
When $r=0$ we omit it.

The Fubini-Study metric is a compact homogeneous
model for Hermitian geometry. 
There are corresponding compact models
for the geometries discussed in Theorem \ref{one} demonstrating that c-projective manifolds equipped with solutions of
\eqref{cprojmeteq} satisfying the given constant rank conditions on their
prolonged systems $L(\zeta)$ exist and are of interest. 
The models for the structures in Theorem \ref{one} are treated in Section \ref{model3} and from them we glean deeper insight into the result. 

Further motivation comes from \cite{CG4}, wherein it is shown that, given a manifold with boundary whose interior is equipped with a pseudo-Hermitian metric satisfying a
non-vanishing scalar curvature condition and whose its c-projective
structure extends to the boundary but whose canonical
connection does not extend to any neighborhood of the boundary, then the metric is c-projectively compact of order 2. 
Examples of c-projectively
compactified metrics discussed in \cite{CG4}
demonstrates the existence of curved examples of the structures considered in Theorem \ref{one}.

The non-degeneracy assumption on $L(\zeta)$, in Theorem \ref{one}, is
a constant $G$-type assumption, where we have used the terminology of
\cite{CGH1,CGH2}. Constancy of $G$-type holds for normal solutions on
connected manifolds, but it is not known to hold for general
solutions. As discussed in \cite{FloodGov} (using results from
\cite{KW1,KW2}), the fixed $G$-type assumption is necessary to get a
coherent theory, as the zero locus of the scalar curvature can be very
poorly behaved. In particular, it need not be a submanifold.

The structure of the article is as follows. 
In the Section
\ref{background} we briefly review c-projective tractor calculus,
c-projective compactification, and BGG machinery. 
These provide the framework and
computational tools we utilize. 
In Section \ref{cprojresults} we state and
prove the main results.

\section{C-projective geometry}\label{background}
\setcounter{section}{2}

In this section, we describe the necessary background from c-projective geometry. We draw from the main monograph on the subject \cite{CEMN} as well as from \cite{CG4} since we will need both the (predominantly) complex viewpoint of the former as well as the real viewpoint of the latter. Let $(M,J)$ be an almost complex manifold of dimension $n=2m\geq 4$.

The complexified tangent bundle $\mathbb{C}TM$ and complexified cotangent bundle $\wedge^1M$ decompose into the following direct sums
\begin{align}\label{directsumdecomp}
\mathbb{C}\mathcal{E}^a\colonequals & \mathbb{C}TM = T^{1,0}M \oplus T^{0,1}M \\
\mathbb{C}\mathcal{E}_a\colonequals & \wedge^1M = \wedge^{0,1}M \oplus \wedge^{1,0}M
\end{align}
where 
\begin{align*}
\mathcal{E}^{\alpha}\colonequals & T^{1,0}M  = \{ X\in \Gamma(TM): JX=iX \} \\
\mathcal{E}^{\overline{\alpha}}\colonequals & T^{0,1}M =  \{ X\in \Gamma(TM): JX=-iX \} \\
\mathcal{E}_{\overline{\alpha}} \colonequals & \wedge^{0,1}M = \{\alpha \in \Gamma(T^*M): J\alpha=-i\alpha \} \\
\mathcal{E}_{\alpha}\colonequals & \wedge^{1,0} M = \{\alpha\in \Gamma(T^*M) : J\alpha=i\alpha \}
\end{align*}
are the vector fields of type $(1,0)$ and $(0,1)$ and $1$-forms of type $(0,1)$ and $(1,0)$, respectively. There are conjugate linear isomorphisms $T^{1,0}M= \overline{T^{0,1}M}$ and $\wedge^{0,1}M=\overline{\wedge^{1,0}}M$. Observe that there are canonical pairings of $\mathcal{E}^{\alpha}$ and $\mathcal{E}^{\overline{\alpha}}$ with their respective duals $\mathcal{E}_{\alpha}$ and $\mathcal{E}_{\overline{\alpha}} $, which is compatible with the canonical complex pairing of $\mathbb{C}\mathcal{E}^a$ with $\mathbb{C}\mathcal{E}_a$. Note that we will be using lower case latin indices for real and complex vector fields and $1$-forms. 

We have the following complex linear projection maps: 
\begin{align*}
\mathbb{C}TM & \twoheadrightarrow T^{1,0}M  \ \ \ \ \ \ \ \ \ \ \ &&X^a  \mapsto \Pi_a^{\alpha}X^a \colonequals \frac{1}{2}(X-iJX) \\
 \mathbb{C}TM & \twoheadrightarrow T^{0,1}M  \ \ \ \ \ \ \ \ \ \ \  &&X^a  \mapsto \overline{\Pi}_a^{\overline{\alpha}}X^a \colonequals \frac{1}{2}(X+iJX),
\end{align*}
and their duals
\begin{align*}
\wedge^{1,0}M&\hookrightarrow \wedge^1M \ \ \ \ \ \ \ \ \ \ \ &&\omega_{\alpha}  \mapsto \Pi_a^{\alpha}\omega_{\alpha} \\
\wedge^{0,1}M&\hookrightarrow \wedge^1M \ \ \ \ \ \ \ \ \ \ \ &&\omega_{\overline{\alpha}}  \mapsto \overline{\Pi}_a^{\overline{\alpha}}\omega_{\overline{\alpha}}.
\end{align*}
Similarly, we have the inclusions:
\begin{align*}
T^{1,0}M & \hookrightarrow \mathbb{C}TM  \ \ \ \ \ \ \ \ \ \ \ &&X^{\alpha}  \mapsto \Pi^a_{\alpha}X^{\alpha} \\
T^{0,1}M & \hookrightarrow \mathbb{C}TM  \ \ \ \ \ \ \ \ \ \ \  &&X^{\overline{\alpha}}  \mapsto \overline{\Pi}^a_{\overline{\alpha}}X^{\overline{\alpha}} ,
\end{align*}
and their duals
\begin{align*}
\wedge^{1}M&\twoheadrightarrow \wedge^{1,0}M \ \ \ \ \ \ \ \ \ \ \ &&\omega_a  \mapsto \Pi^a_{\alpha}\omega_{\alpha} \\
\wedge^{1}M&\twoheadrightarrow \wedge^{0,1}M \ \ \ \ \ \ \ \ \ \ \ &&\omega_a \mapsto \overline{\Pi}^a_{\overline{\alpha}}\omega_{\overline{\alpha}}.
\end{align*}
These lead to the following identities
\begin{align*}
&\Pi_{\alpha}^a\Pi^{\beta}_a  = \delta^{\beta}_{\alpha} \ \ \ \ \ \ \ \ \ \ &&\Pi^{\alpha}_a\Pi^b_{\alpha}  = \frac{1}{2}(\delta^b_a-iJ^b_a) \ \ \ \ &&&\overline{\Pi}^{\overline{\alpha}}_a\overline{\Pi}^b_{\overline{\alpha}} = \frac{1}{2}(\delta^b_a+iJ^b_a)
\end{align*}
\begin{align*}
&\Pi_{\alpha}^aJ_a^b =i\Pi^b_{\alpha} \ \ \ \ \  &&\overline{\Pi}^a_{\overline{\alpha}}J^b_a = -i\overline{\Pi}^b_{\overline{\alpha}} &&& J^b_a\Pi^{\alpha}_b=i\Pi_a^{\alpha} &&&& J^b_a\overline{\Pi}^{\overline{\alpha}}_b=-i\overline{\Pi}_a^{\overline{\alpha}}.
\end{align*}
Complex valued differential forms can be naturally decomposed according to type e.g. 
\[
\wedge^2M = \wedge^{0,2}M\oplus\wedge^{1,1}M\oplus\wedge^{0,2}M.
\]

Although such characterizations quickly grow cumbersome for higher forms, $2$-forms are characterized as follows:
\begin{align*}
J_a^c\omega_{bc}=-i\omega_{ab} \Longleftrightarrow \omega_{ab} \ \operatorname{is\ type\ } (2,0)  \\
J_{[a}^c\omega_{b]c}=0 \Longleftrightarrow \omega_{ab} \ \operatorname{is\ type\ } (1,1)  \\
J_a^c\omega_{bc}=i\omega_{ab} \Longleftrightarrow \omega_{ab} \ \operatorname{is\ type\ } (0,2) 
\end{align*}
This characterization extends to appropriate almost complex vector bundle valued real forms. E.g., the Nijenhuis tensor $N^J\in \Omega^2(M,TM)$, which satisfies $N^J(X,JY)=JN^J(Y,X)$, is type $(0,2)$.

\subsection{Complex Connections}
Affine connections preserving $J$, i.e. satisfying $\nabla_{\mu}J\eta = J\nabla_{\mu}\eta$ for all $\mu,\eta\in \mathfrak{X}(M)$, are termed {\em complex connections}. It follows that an affine connection is complex if and only if its extension to a linear connection on $\mathbb{C}\mathcal{E}^a$ preserves types. The torsion $T_{ab}^c$ of a complex connection $\nabla$ naturally splits into types, with the $(0,2)$ component being precisely $-\frac{1}{4}N^J$. So a complex connection cannot be torsion-free unless its Nijenhuis tensor vanishes identically i.e. the almost complex structure is integrable. Given an almost complex manifold, there always exists a complex connection on it with torsion of type $(0,2)$ by \cite{Lichnerowicz}. The $(2,0)$ and $(1,1)$ components of the torsion can be removed via a suitable modification to the complex connection, but as an almost complex invariant the $(0,2)$ component may not be eliminated. 

The pseudo-Riemannian metrics of interest on almost complex manifolds are those which are Hermitian for J, i.e. satisfying $g_{ab}J^a_cJ^b_d=g_{cd}$. {\em Pseudo-K\"{a}hler metrics} are precisely the Hermitain metrics whose Levi-Civita connections are complex. Projective equivalence of two such pseudo-K\"{a}hler metrics on $(M,J)$ implies that they are in fact affinely equivalent \cite{Bochner}. Thus we must introduce a broader class of curves, the so-called {\em $J$-planar} curves. A $J$-planar curve is a curve $c:I\rightarrow M$ satisfying 
\[
\nabla_{\dot{c}}\dot{c}=\alpha \dot{c} + \beta J\dot{c}
\] 
for some $\alpha,\beta:I\rightarrow \mathbb{R}$. These are also commonly termed holomorphically flat curves \cite{OtsukiTashiro} or $h$-planar curves \cite{Mat5}. Clearly all curves are J-planar on almost complex manifolds of real dimension $2$. 

Consider $(\mathbb{CP}^n, J_{Can}, g^{FS})$, where $J_{Can}$ denotes the canonical complex structure and $g^{FS}$ denotes the Fubini-Study metric. Observing that the embedding of any complex line $\mathbb{CP}^1\hookrightarrow \mathbb{CP}^n$ is totally geodesic with respect to $\nabla^{FS}$, it follows (for details see Example 1 of \cite{Mat5}) that the $J$-planar curves on $(\mathbb{CP}^n, J_{Can}, [\nabla^{FS}])$ are precisely the curves in these linearly embedded copies of $\mathbb{CP}^1$.

We say that two complex connections $\nabla$ and $\tilde{\nabla}$ on an almost complex manifold $(M,J)$ are {c-projectively equivalent} if they have the same $J$-planar curves and the same torsion. Two such complex connections are explicitly related by
\begin{align*}\label{cprojchange}
& \tilde{\nabla}_{a}\eta^b = \nabla_{a}\eta^b + \Upsilon_c\eta^b - \Upsilon_cJ^c_aJ^b_d\eta^d + \Upsilon_c\eta^c\delta_a^b-\Upsilon_cJ^c_d\eta^dJ^b_a \\
&\tilde{\nabla}_{\alpha}\eta^{\gamma} =\nabla_{\alpha}\eta^{\gamma} + 2\Upsilon_{\alpha}\eta^{\gamma} + 2\delta^{\gamma}_{\alpha}\Upsilon_{\beta}\eta^{\beta} \\
& \tilde{\nabla}_{\overline{\alpha}}\eta^{\gamma} =\nabla_{\overline{\alpha}}\eta^{\gamma}\\
& \tilde{\nabla}_{a}\nu_c = \nabla_{a}\nu_c - \Upsilon_a\nu_c + \Upsilon_bJ^b_aJ^d_c\nu_d - \Upsilon_c\nu_a+\Upsilon_dJ^b_a\nu_bJ^d_c \\
& \tilde{\nabla}_{\alpha}\nu_{\gamma} =\nabla_{\alpha}\nu_{\gamma} - 2\Upsilon_{\alpha}\nu_{\gamma}-2\nu_{\alpha}\Upsilon_{\gamma} \\
& \tilde{\nabla}_{\overline{\alpha}}\nu_{\gamma}  = \nabla_{\overline{\alpha}}\nu_{\gamma}
\end{align*}
for some one form $\Upsilon\in \Omega^1(M)$, where $\Upsilon_{\alpha}\colonequals \Pi_{\alpha}^a\Upsilon_a$ and $\Upsilon_{\overline{\alpha}}\colonequals \Pi_{\overline{\alpha}}^a\Upsilon_a$. We write $\tilde{\nabla}=\nabla + \Upsilon$ as a brief notation to indicate connections related as in the above formulae. Note that we follow the convention of \cite{CG4} in \eqref{cprojchange} rather than that of \cite{CEMN}.

In fact, we will only consider complex connections $\nabla$ with {\em minimal} torsion $T^{\nabla}=-\frac{1}{4}N^J$, we term these {\em minimal complex connections}. We write $\bD$ for an equivalence class of c-projectively related minimal complex affine connections and we call it an {\em almost c-projective structure}. We call a triple $(M,J,\bD)$ an {\em almost c-projective manifold}. If $J$ is integrable we call $(M,J,\bD)$ a {\em c-projective manifold}.

\subsection{C-projective densities}\label{cprojdensities}
We write $\mathcal{E}(m+1,0)\colonequals \Lambda^m_\mathbb{C}TM$ for the top complex exterior power of the tangent bundle. We will assume the existence of $(m+1)^{\operatorname{th}}$ roots of this bundle. In particular, this holds on the model $(\mathbb{CP}^n, J_{Can}, [\nabla^{FS}])$ and hence locally for all almost c-projective manifolds.  Assuming a choice $\mathcal{E}(1,0)$ of $(m+1)^{\operatorname{th}}$ root of $\mathcal{E}(m+1,0)$, denote the dual, conjugate, and dual conjugate to $\mathcal{E}(1,0)$ by $\mathcal{E}(-1,0)$, $\mathcal{E}(0,1)$, and $\mathcal{E}(0,-1)$, respectively. Forming tensor powers of these bundles gives complex density bundles $\mathcal{E}(k,l)$ of weight $(k,l)$ where $k,l\in \mathbb{Z}$. 

There is a natural inclusion $\mathcal{E}(-2m-2)\hookrightarrow \mathcal{E}(-m-1,-m-1)$ of the real densities of weight $(-2m-2)$ into the complex densities of weight $(-m-1,-m-1)$ as the real subbundle fixed by conjugation. The orientation on $(M,J)$, induced by the almost complex structure $J$, induces an orientation on the trivial bundle $\mathcal{E}(-2m-2)$ and so allows us to take arbitrary real roots of $\mathcal{E}(-2m-2)$ which give the usual real densities $\mathcal{E}(w)$ of weight $w\in \mathbb{R}$. Thus, for $w,w' \in \mathbb{R}$ such that $w-w'\in\mathbb{Z}$, we can define complex density bundle $\mathcal{E}(w,w')$. We denote the image of $\mathcal{E}(2w)$ under this inclusion map by $\mathcal{E}(w,w)_{\mathbb{R}}$.   

It is shown in \cite{CG4} that for $\upsilon\in\Gamma(\mathcal{E}(w,w'))$ with $w,w'\in\mathbb{R}$ and $w-w'\in \mathbb{Z}$
\begin{align}
\tilde{\nabla}_a\upsilon = \nabla_a\upsilon +(w+w')\Upsilon_a\upsilon-(w-w')i\Upsilon_bJ^b_a\upsilon.
\end{align}
In particular, for $\tau\in\Gamma(\mathcal(w,w)_{\mathbb{R}})$ with $w\in\mathbb{R}$ this reduces to 
\begin{align}
\tilde{\nabla}_a\tau = \nabla_a\tau +2w\Upsilon_a\tau.
\end{align}

\subsection{C-projective compactness}\label{cprojcompactness}
A {\em local
  defining function} for a hypersurface $\Sigma$ is a smooth function
$r:U \rightarrow \mathbb{R}$, defined on an open  subset $U$ of $M$,
satisfying $\mathcal{Z}(r) = \Sigma \cap U$ and $\mathcal{Z}(dr) \cap
\Sigma = \varnothing$ on $\Sigma \cap U$, where $\mathcal{Z}(-)$
denotes the zero locus. Then, extending this concept, a {\em local defining
  density of weight w} is a local section $\sigma$ of $\mathcal{E}(w)$
such that $\sigma = r\hat{\sigma}$, where $r$ is a defining function
for $\Sigma$ and $\hat{\sigma}$ is a section of $\mathcal{E}(w)$ that
is nonvanishing on $U$. 

Consider a smooth manifold with boundary, $\overline{M}=M\cup \partial M$, such that the interior $M$ is equipped with an almost complex structure $J$ and a minimal complex affine connection $\nabla$ on $TM$. The (minimal) complex connection $\nabla$ on $TM$ is said to be {\em c-projectively compact} of order $\alpha \in \mathbb{R}_+$ if and only if for any $x \in \partial M$ there is a neighborhood $U$ of $x$ in $\overline{M}$ and a defining function $\rho:U\rightarrow \mathbb{R}_{\geq 0}$ for $U\cap \partial M$ such that the c-projectively equivalent connection $\overline{\nabla}=\nabla + \frac{d\rho}{\alpha\rho}$ on $U\cap M$ smoothly extends to all of $U$, i.e $\overline{\nabla}_{\mu}\eta$ is smooth up to the boundary for all $\mu,\eta\in\mathfrak{X}(\overline{U})$. In what follows we will only be concerned with the case where $\alpha=2$, so we will often omit the order of the c-projective compactification. As in the case of projective compactification  (cf. \cite{CG3}) this notion is independent of choice of defining function. A connection $\nabla$ is said to be {\em special} if and only if there is a section $\tau\in\Gamma(w,w)_{\mathbb{R}}$ with $w\neq0$ such that $\tau$ is parallel for $\nabla$. This leads to the following, which is Proposition 6 of \cite{CG4}.
\begin{proposition}\label{cprojcompactnessprop}
Let $\overline{M}=M\cap\partial M$ be a smooth manifold of dimension $n=2m$ equipped with a special affine connection $\nabla$ on $TM$. Then the following hold:
\begin{enumerate}
 \item If $\nabla$ is c-projectively compact of order $2$ then a non-vanishing section of $\mathcal{E}(1,1)_{\mathbb{R}}$ which is parallel for $\nabla$ extends by zero to a defining density for $\partial M$.
 \item If the almost c-projective structure on $M$ determined by $\nabla$ smoothly extends to $\overline{M}$ and there exists a defining density $\tau\in\Gamma(\mathcal{E}(1,1)_{\mathbb{R}})$ for $\partial M$ that is parallel on $M$ for $\nabla$, then $\nabla$ is c-projectively compact of order 2.
 \end{enumerate}
\end{proposition}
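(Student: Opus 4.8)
The plan is to read off both implications directly from the defining data of a special, c-projectively compact connection, using the density transformation law recorded in Section \ref{cprojdensities}. Recall that specialness of $\nabla$ supplies a nonvanishing parallel section $\tau\in\Gamma(\mathcal{E}(w,w)_{\mathbb{R}})$ with $w\neq 0$; by taking a real root we may assume $w=1$, so that $\tau\in\Gamma(\mathcal{E}(1,1)_{\mathbb{R}})$. The key analytic fact tying the two notions together is the density transformation formula $\tilde{\nabla}_a\tau=\nabla_a\tau+2w\Upsilon_a\tau$, which for the specific c-projective change $\Upsilon=\frac{d\rho}{2\rho}$ (the case $\alpha=2$) becomes $\overline{\nabla}_a\tau=\nabla_a\tau+\frac{\rho_a}{\rho}\tau$, where $\rho_a=\nabla_a\rho$.

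For part (1), I would start from the hypothesis that $\nabla$ is c-projectively compact of order $2$, so near any boundary point there is a defining function $\rho$ for which $\overline{\nabla}=\nabla+\frac{d\rho}{2\rho}$ extends smoothly to $\overline{U}$. Since $\tau$ is $\nabla$-parallel, the transformation law gives $\overline{\nabla}_a\tau=\frac{\rho_a}{\rho}\tau$ on $U\cap M$. The strategy is to rescale: set $\sigma\colonequals \rho\,\tau$, a section of $\mathcal{E}(1,1)_{\mathbb{R}}$ on $U\cap M$, and compute $\overline{\nabla}_a\sigma=\rho_a\tau+\rho\,\overline{\nabla}_a\tau=\rho_a\tau+\rho_a\tau=2\rho_a\tau$. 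Because $\overline{\nabla}$ extends smoothly to the boundary and $\rho,\tau$ are smooth up to $\partial M$, this shows $\sigma$ extends smoothly across $\partial M$; moreover $\sigma=\rho\tau$ vanishes exactly on $\mathcal{Z}(\rho)=\partial M\cap U$ with $\tau$ nonvanishing there, so $\sigma$ is precisely the zero-extension of the parallel density $\tau$ to a defining density for $\partial M$. The only care needed is to check that $\sigma$ is genuinely the canonical extension-by-zero and that its differential is nonzero along $\partial M$, which follows from $\mathcal{Z}(d\rho)\cap\partial M=\varnothing$ in the definition of a defining function.

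For part (2), I would run the computation in reverse. Given a defining density $\tau\in\Gamma(\mathcal{E}(1,1)_{\mathbb{R}})$ for $\partial M$, parallel on $M$ and with the almost c-projective structure extending smoothly to $\overline{M}$, write $\tau=\rho\hat{\tau}$ for a local defining function $\rho$ and a nonvanishing smooth $\hat{\tau}$, as in the definition of defining density. The aim is to exhibit a specific c-projective change that extends $\nabla$ smoothly to the boundary, namely $\overline{\nabla}=\nabla+\frac{d\rho}{2\rho}$. The natural route is to compare $\nabla$ with the connection $\hat{\nabla}$ characterized by having $\hat{\tau}$ parallel: since $\tau=\rho\hat{\tau}$ and $\tau$ is $\nabla$-parallel, one gets that $\nabla$ and $\hat{\nabla}$ differ by the one-form $\frac{d\rho}{2\rho}$ via the density transformation law, so $\overline{\nabla}=\nabla+\frac{d\rho}{2\rho}=\hat{\nabla}$. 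Because $\hat{\tau}$ is smooth and nonvanishing up to $\partial M$ and the c-projective class extends smoothly, the connection $\hat{\nabla}$ is smooth up to the boundary, delivering c-projective compactness of order $2$.

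The main obstacle I anticipate is part (2): one must verify that $\overline{\nabla}=\nabla+\frac{d\rho}{2\rho}$ is genuinely a \emph{smooth} connection up to $\partial M$, not merely that the parallel density exists. The subtlety is that $\frac{d\rho}{2\rho}$ blows up at the boundary, so smoothness of the combined connection is not automatic and must be extracted from the interplay of the parallel condition $\nabla\tau=0$ with the smooth extension of the c-projective structure; this is where the precise definition of c-projective compactness and the independence of the notion from the choice of defining function (cited from \cite{CG3,CG4}) do the real work. Establishing this cleanly, rather than the essentially algebraic computations of part (1), is the crux of the argument.
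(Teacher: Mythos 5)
The paper itself offers no proof of this proposition --- it is quoted verbatim as Proposition 6 of \cite{CG4} --- so the comparison below is against the standard argument given there, and against correctness.

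Your part (1) contains a genuine gap, and in fact the rescaling goes in the wrong direction. You set $\sigma \colonequals \rho\tau$ and conclude that $\sigma$ extends smoothly across $\partial M$ ``because $\overline{\nabla}$ extends smoothly and $\rho,\tau$ are smooth up to $\partial M$.'' But $\tau$ is only given as a parallel section over the interior $M$; its boundary behavior is precisely what part (1) asks you to establish, so invoking smoothness of $\tau$ up to $\partial M$ is circular. Moreover $\sigma=\rho\tau$ is not the extension by zero of $\tau$ (the two differ by the factor $\rho$ on $M$), and once the correct asymptotics $\tau = \rho\cdot(\mbox{nonvanishing})$ are known, your $\sigma$ vanishes to \emph{second} order along $\partial M$, so it is not a defining density either. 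The correct move is the opposite rescaling: from $\overline{\nabla}_a\tau=\frac{\rho_a}{\rho}\tau$ one computes
\[
\overline{\nabla}_a\bigl(\rho^{-1}\tau\bigr) \;=\; -\rho^{-2}\rho_a\tau+\rho^{-1}\tfrac{\rho_a}{\rho}\tau \;=\; 0 ,
\]
so $\hat{\tau}\colonequals\rho^{-1}\tau$ is \emph{parallel} for the connection $\overline{\nabla}$, which by hypothesis is smooth up to the boundary (as is the density bundle $\mathcal{E}(1,1)_{\mathbb{R}}\cong\mathcal{E}(2)$, which depends only on the smooth structure of $\overline{M}$). A parallel section of a line bundle with connection smooth on $\overline{U}$ extends uniquely, by parallel transport, to a smooth nowhere-vanishing parallel section of $\overline{U}$. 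Hence $\tau=\rho\,\hat{\tau}$ extends smoothly to $\overline{U}$, vanishes exactly on $\partial M\cap U$ and only to first order, and is therefore the extension of $\tau$ by zero to a defining density. This is the step your proposal is missing, and it is where the real work of the proposition lies --- not in part (2) as your closing paragraph suggests.

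Your part (2) is essentially correct: identifying $\nabla+\frac{d\rho}{2\rho}$ with the unique connection $\hat{\nabla}$ in the boundary-extended c-projective class that makes $\hat{\tau}=\rho^{-1}\tau$ parallel, and noting that $\hat{\nabla}$ is smooth up to $\partial M$ because it differs from any boundary-smooth representative of the class by the smooth one-form $-\nabla'\hat{\tau}/(2\hat{\tau})$ with $\hat{\tau}$ nonvanishing, is exactly the right mechanism; the independence-of-defining-function fact you worry about is not needed, since the definition of c-projective compactness only demands existence of one suitable $\rho$.
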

Let $(M,\nabla)$ be a smooth manifold equipped with a complex connection. If there exists a smooth manifold with boundary $\overline{M}$ such that $\overline{M} =M\cup\partial M$ for which $\nabla$ is c-projectively compact we will say that $(\overline{M},[\nabla])$ is a {\em c-projective compactification} of $(M,\nabla)$.
 
\subsection{Admissible metrics}\label{admissiblemetrics}
How do metrics fit into the picture? We discussed earlier that the relevant pseudo-Riemannian metrics in almost c-projective geometry are those which are Hermitian with respect to $J$. Minimizing the torsion we come to the class of connections $\bD$, which in general have torsion of type $(0,2)$, and so cannot be the Levi-Civita connection. Fortunately, a minimal complex connection preserving a pseudo-Riemannian metric, that is Hermitian for $J$, is uniquely determined. Such a connection need not exist in general. 

On an almost complex manifold $(M,J)$ a pseudo-Riemannian metric $g$ that is Hermitian for $J$ is said to be {\em admissible} if and only if there is a minimal complex affine connection on $(M,J)$ preserving $g$. If such a connection exists it is termed the {\em canonical connection} associated to $g$. By Proposition 4.1 of \cite{CEMN} or Proposition 7 of \cite{CG4} a pseudo-Riemannian metric on an almost complex manifold $(M,J)$ that is Hermitian for $J$ is admissible if and only if it is quasi-K\"{a}hler in the sense of Gray-Hervella \cite{GrayHervella}. 

In the notation of Gray-Hervella, quasi-K\"{a}hler is $\mathcal{W}_1\oplus\mathcal{W}_2$. $\mathcal{W}_1$ denotes the class of nearly K\"{a}hler manifolds i.e. $d\omega=3\nabla\omega$ where $\omega$ is the {\em K\"{a}hler form} $\omega_{ab}=J_a^cg_{cb}$ and $\nabla$ is the canonical connection associated to $g$. $\mathcal{W}_2$ denotes the class of almost K\"{a}hler manifolds i.e. $d\omega=0$. If $J$ is integrable, $g$ is admissible if and only if it is pseudo-K\"{a}hler, i.e. $\nabla\omega=0$. 

\subsection{The c-projective Schouten tensor}
The curvature tensor, $R\indices{_{ab}^c_d}$, of a complex affine connection, $\nabla$, on an almost complex manifold $(M^{2m},J)$ satisfies $R\indices{_{ab}^c_i}J_d^i=R\indices{_{ab}^i_d}J^c_i$. Denoting the Ricci tensor by $R_{ab}=R\indices{_{ia}^i_b}$ we define the {\em Rho tensor} or {\em c-projective Schouten tensor} by 
\begin{align}
P_{ab}\colonequals \frac{1}{2(m+1)}(R_{ab}+\frac{1}{m-1}(R_{(ab)}-J^i_{(a}J^j_{b)}R_{ij})).
\end{align} 
Given a complex connection $\nabla$ with Schouten $P$, The Schouten $\overline{P}$ of the c-projectively related connection $\tilde{\nabla}=\nabla + \Upsilon$ is given by
\begin{align}\label{cprojschoutentransform}
\tilde{P}_{ab}=P_{ab}-\nabla_a\Upsilon_b+\Upsilon_a\Upsilon_b - J_a^iJ_b^j\Upsilon_i\Upsilon_j.
\end{align}
Writing $W\indices{_{ab}^c_d}$ for the Weyl curvature we have the following
\begin{align}\label{cprojschoutencurvature}
R\indices{_{ab}^c_d}=W\indices{_{ab}^c_d} +2\delta^c_{[a}P_{b]d}-2P_{[ab]}\delta_d^c -2J^i_{[a}P_{b]i}J^c_d-2J^c_{[a}P_{b]i}J^i_d.
\end{align}
Observe that if the Ricci is Hermitian then $P_{ab}=\frac{1}{2(m+1)}R_{ab}$ and if the Ricci is symmetric then the Schouten is symmetric as well. 

\subsection{C-projective tractor bundle}
An almost c-projective manifold $(M,J,\bD)$ equipped with a choice of density bundle $\mathcal{E}(1,0)$ is equivalent to a Cartan geometry $(\mathcal{P}\twoheadrightarrow M,\omega)$ of type $(G, P)$ where $G = SL(m+1,\mathbb{C}) \cong SL(2m+2,\mathbb{J}) $, which we identify with its standard representation on $\mathbb{C}^{m+1}$, and $P\subseteq G$ is the isotropy group of a complex line through the origin in $\mathbb{C}^{m+1}$. Restricting this representation to $P$, call the restricted representation space $\mathbb{V}$. The corresponding tractor bundle is the {\em standard c-projective tractor bundle} $\mathcal{T}$, i.e.
\begin{align}
\mathcal{E}^{\mathscr{A}}=\mathcal{T}\colonequals \mathcal{P}\times_{P}\mathbb{V}.
\end{align}
Its dual, the {\em standard cotractor bundle}, is given by
\begin{align}
\mathcal{E}_{\mathscr{A}}=\mathcal{T}^*\colonequals \mathcal{P}\times_{P}\mathbb{V}^*.
\end{align}
We define the {\em standard complex tractor bundle} to be the $(1,0)$ component of the complexification of the real standard tractor bundle
\begin{align}
\mathcal{E}^A=\mathcal{T}^{1,0}\subset \mathbb{C}\mathcal{T},
\end{align}
We denote it's conjugate, dual, and dual conjugate by $\mathcal{E}^{\overline{A}}=\mathcal{T}^{0,1}=\overline{\mathcal{T}^{1,0}}$,  $\mathcal{E}_{A}=(\mathcal{T}^{1,0})^*$, and $\mathcal{E}_{\overline{A}}=(\mathcal{T}^{0,1})^*$, respectively. 

Recalling the various natural maps denoted by $\Pi$ from the beginning of Section \ref{background}, observe that there are, \emph{mutatis mutandis}, analogous natural complex linear inclusions and projections at the tractor level which satisfy similar identities. For instance, 
\begin{align*}
(\mathcal{T}^*)^{(1,0)}&\hookrightarrow \mathbb{C}\mathcal{T}^* \ \ \ \ \ \ \ \ \ \ \ &&u_{A}  \mapsto \Pi_{\mathscr{A}}^{A}u_{A} \\
(\mathcal{T}^*)^{(0,1)}&\hookrightarrow \mathbb{C}\mathcal{T}^* \ \ \ \ \ \ \ \ \ \ \ &&u_{\overline{A}}  \mapsto \overline{\Pi}_{\mathscr{A}}^{\overline{A}}u_{\overline{A}}.
\end{align*}
Note that we use capital script indices for the real standard (co)tractor bundle, and its complexification when no confusion can arise. The structure of the tractor bundles defined above can be described by the following short exact sequences
\begin{align*}
& 0\rightarrow \mathcal{E}(-1,0)  \xrightarrow{X^{\mathscr{A}}} \mathcal{E}^{\mathscr{A}} \xrightarrow{Z^a_{\mathscr{A}}}  \mathcal{E}^a\otimes_{\mathbb{C}}\mathcal{E}(-1,0)  \rightarrow 0 \\
& 0\rightarrow \mathcal{E}_a\otimes_{\mathbb{C}}\mathcal{E}(1,0) \xrightarrow{Z^a_{\mathscr{A}}} \mathcal{E}_{\mathscr{A}} \xrightarrow{X^{\mathscr{A}}} \mathcal{E}(1,0)\rightarrow 0 \\
& 0\rightarrow \mathcal{E}_{\overline{\alpha}}\otimes\mathcal{E}(0,1)\xrightarrow{Z^{\overline{\alpha}}_{\overline{A}}} \mathcal{E}_{\overline{A}} \xrightarrow{X^{\overline{A}}} \mathcal{E}(0,1)\rightarrow 0 \\
& 0\rightarrow \mathcal{E}_{\alpha}\otimes\mathcal{E}(1,0)\xrightarrow{Z^{{\alpha}}_{{A}}} \mathcal{E}_{{A}} \xrightarrow{X^{{A}}} \mathcal{E}(1,0)\rightarrow 0 \\
& 0 \rightarrow \mathcal{E}(-1,0) \xrightarrow{X^A} \mathcal{E}^A \xrightarrow{Z_A^{\alpha}} \mathcal{E}^{\alpha}\otimes\mathcal{E}(-1,0) \rightarrow 0 \\
& 0 \rightarrow \mathcal{E}(0,-1) \xrightarrow{X^{\overline{A}}} \mathcal{E}^{\overline{A}} \xrightarrow{Z_{\overline{A}}^{\overline{\alpha}}} \mathcal{E}^{\overline{\alpha}}\otimes\mathcal{E}(0,-1) \rightarrow 0.
\end{align*}
A choice of connection $\nabla\in \bD$ in the c-projective class determines a Weyl structure which splits these short exact sequences as follows
\begin{align*}
& 0\leftarrow \mathcal{E}(-1,0)  \xleftarrow{Y_{\mathscr{A}}} \mathcal{E}^{\mathscr{A}} \xleftarrow{W_a^{\mathscr{A}}}  \mathcal{E}^a\otimes_{\mathbb{C}}\mathcal{E}(-1,0)  \leftarrow 0 \\
& 0\leftarrow \mathcal{E}_a\otimes_{\mathbb{C}}\mathcal{E}(1,0) \xleftarrow{W_a^{\mathscr{A}}} \mathcal{E}_{\mathscr{A}} \xleftarrow{Y_{\mathscr{A}}} \mathcal{E}(1,0)\leftarrow 0 \\
& 0\leftarrow \mathcal{E}_{\overline{\alpha}}\otimes\mathcal{E}(0,1)\xleftarrow{W_{\overline{\alpha}}^{\overline{A}}} \mathcal{E}_{\overline{A}} \xleftarrow{Y_{\overline{A}}} \mathcal{E}(0,1)\leftarrow 0 \\
& 0\leftarrow \mathcal{E}_{\alpha}\otimes\mathcal{E}(1,0)\xleftarrow{W_{{\alpha}}^{{A}}} \mathcal{E}_{{A}} \xleftarrow{Y_{{A}}} \mathcal{E}(1,0)\leftarrow 0 \\
& 0 \leftarrow \mathcal{E}(-1,0) \xleftarrow{Y_A} \mathcal{E}^A \xleftarrow{W^A_{\alpha}} \mathcal{E}^{\alpha}\otimes\mathcal{E}(-1,0) \leftarrow 0 \\
& 0 \leftarrow \mathcal{E}(0,-1) \xleftarrow{Y_{\overline{A}}} \mathcal{E}^{\overline{A}} \xleftarrow{W^{\overline{A}}_{\overline{\alpha}}} \mathcal{E}^{\overline{\alpha}}\otimes\mathcal{E}(0,-1) \leftarrow 0.
\end{align*}
These {\em splitting tractors} $W$, $X$, $Y$, and $Z$ maps can be viewed as weighted tractors as follows
\begin{align*}
& W_{a}^{\mathscr{A}}\in\Gamma(\mathcal{E}^{\mathscr{A}}\otimes (\mathcal{E}_{a} \otimes_{\mathbb{C}}\mathcal{E}(1,0)) \ \ \ \ \ &&  W_{\overline{\alpha}}^{\overline{A}}\in\Gamma(\mathcal{E}_{\alpha}^{\overline{A}}(0,1))  &&& W_{\alpha}^{{A}}\in\Gamma(\mathcal{E}_{\alpha}^{{A}}(1,0))) \\
& X^{\mathscr{A}}\in\Gamma(\mathcal{E}^{\mathscr{A}}(1,0)) \ \ \ \ \ &&  X^{\overline{A}}\in\Gamma(\mathcal{E}^{\overline{A}}(0,1))  &&& X^{{A}}\in\Gamma(\mathcal{E}^{{A}}(1,0)) \\
& Y_{\mathscr{A}}\in\Gamma(\mathcal{E}^{\mathscr{A}}(-1,0)) \ \ \ \ \ &&  Y_{\overline{A}}\in\Gamma(\mathcal{E}^{\overline{A}}(0,-1))  &&& Y_{{A}}\in\Gamma(\mathcal{E}^{{A}}(-1,0)) \\
& Z^{a}_{\mathscr{A}}\in\Gamma(\mathcal{E}_{\mathscr{A}}\otimes(\mathcal{E}^{a}\otimes_{\mathbb{C}}\mathcal{E}(-1,0))) \ \ \ \ \ &&  Z^{\overline{\alpha}}_{\overline{A}}\in\Gamma(\mathcal{E}^{\alpha}_{\overline{A}}(0,-1))  &&& Z^{\alpha}_{{A}}\in\Gamma(\mathcal{E}^{\alpha}_{{A}}(-1,0)) 
\end{align*}
These maps satisfy the obvious relations
\begin{align*}
\begin{tabular}{ c| c  c }
 & $X^{\mathscr{A}}$ & $W^{\mathscr{A}}_{{a}}$ \\
\hline
$Y_{\mathscr{A}}$ & 1 & 0 \\ 
$Z_{\mathscr{A}}^{{b}}$ & 0 & $\delta^{{b}}_{{a}}$ \\ 
\end{tabular} \ \ \ \ \ \ \ \ 
\begin{tabular}{ c| c  c }
 & $X^{\overline{A}}$ & $W^{\overline{A}}_{\overline{\alpha}}$ \\
\hline
$Y_{\overline{A}}$ & 1 & 0 \\ 
$Z_{\overline{A}}^{\overline{\beta}}$ & 0 & $\delta^{\overline{\beta}}_{\overline{\alpha}}$ \\ 
\end{tabular} \ \ \ \ \ \ \ \ 
\begin{tabular}{ c| c  c }
 & $X^A$ & $W^A_{\alpha}$ \\
\hline
$Y_A$ & 1 & 0 \\ 
$Z_A^{\beta}$ & 0 & $\delta^{\beta}_{\alpha}$ \\ 
\end{tabular}
\end{align*}
Given two splittings (i.e., connections) $\nabla,\hat{\nabla}\in\bD$, sections of $\mathcal{E}^{\mathscr{B}}$ and $\mathcal{E}_{\mathscr{B}}$ change by
\begin{align}
&\widehat{\left(
\begin{array}{c}
\lambda^{b}\otimes_{\mathbb{C}} \sigma  \\
 \rho  \\
\end{array} 
\right)}
 = 
 \left(
\begin{array}{c}
\lambda^{b}\otimes_{\mathbb{C}} \sigma \\
\rho-\Upsilon_{b}\lambda^{b}\rho+\Upsilon_bJ^b_a\lambda^ai\rho  \\
\end{array} 
\right) \\
&\widehat{\left(
\begin{array}{c}
\upsilon  \\
\nu _{b}\otimes_{\mathbb{C}}\epsilon  \\
\end{array} 
\right)}
 = 
 \left(
\begin{array}{c}
\upsilon \\
\nu_{b}\otimes_{\mathbb{C}}\epsilon +\Upsilon_{b}\otimes_{\mathbb{C}} \epsilon\upsilon + J_b^a\Upsilon_a\otimes_{\mathbb{C}}\epsilon\upsilon i\\
\end{array} 
\right),
\end{align}
and sections of $\mathcal{E}^{{B}}$ and $\mathcal{E}_{{B}}$ change by
\begin{align}
&\widehat{\left(
\begin{array}{c}
\eta^{\beta}  \\
 \rho  \\
\end{array} 
\right)}
 = 
 \left(
\begin{array}{c}
\eta^{\beta} \\
\rho-2\Upsilon_{\beta}\eta^{\beta}  \\
\end{array} 
\right) \\
&\widehat{\left(
\begin{array}{c}
\xi  \\
\mu _{\beta}  \\
\end{array} 
\right)}
 = 
 \left(
\begin{array}{c}
\xi \\
\mu_{\beta} +2\Upsilon_{\beta}\xi  \\
\end{array} 
\right).
\end{align}

\subsection{The c-projective tractor connection}
For a choice of splitting, $\nabla\in\bD$, the tractor connection on $\mathcal{E}^{\mathscr{B}}$ and $\mathcal{E}_{\mathscr{B}}$ 
is given by
\begin{align}
&\nabla_a{\left(
\begin{array}{c}
\lambda^{b}\otimes_{\mathbb{C}} \sigma  \\
 \rho  \\
\end{array} 
\right)}
 = 
 \left(
\begin{array}{c}
(\nabla_a\lambda^{b})\otimes_{\mathbb{C}} \sigma + \lambda^{b}\otimes_{\mathbb{C}}(\nabla_a \sigma) + \delta_a^b\otimes_{\mathbb{C}}\rho \\
\nabla_a\rho-P_{ab}\lambda^{b}\sigma+P_{ab}J^b_c\lambda^ci\sigma  \\
\end{array} 
\right) \\
&\nabla_a{\left(
\begin{array}{c}
\upsilon  \\
\nu _{b}\otimes_{\mathbb{C}}\epsilon  \\
\end{array} 
\right)}
 = 
 \left(
 \begin{array}{c}
\nabla_a\upsilon -\nu_a\epsilon \\
(\nabla_a\nu _{b})\otimes_{\mathbb{C}}\epsilon + \nu _{b}\otimes_{\mathbb{C}}(\nabla_a\epsilon) +P_{ab}\otimes_{\mathbb{C}} \epsilon\upsilon - J_a^cP_{cb}\otimes_{\mathbb{C}}\epsilon\upsilon i\\
\end{array} 
\right),
\end{align}
and on sections of $\mathcal{E}^{{B}}$ and $\mathcal{E}_{{B}}$ it is given by
\begin{align}
&\nabla_{\alpha}{\left(
\begin{array}{c}
\eta^{\beta}  \\
 \rho  \\
\end{array} 
\right)}
 = 
 \left(
\begin{array}{c}
\nabla_{\alpha}\eta^{\beta}+\delta_{\alpha}^{\beta}\rho \\
\nabla_{\alpha}\rho-2P_{\alpha\beta}\eta^{\beta}  \\
\end{array} 
\right) \\
&\nabla_{\alpha}{\left(
\begin{array}{c}
\xi  \\
\mu _{\beta}  \\
\end{array} 
\right)}
 = 
 \left(
\begin{array}{c}
\nabla_{\alpha}\xi - \mu_{\alpha} \\
\nabla_{\alpha}\mu_{\beta} +2P_{\alpha\beta}\xi  \\
\end{array} 
\right)
\end{align}
\begin{align}
&\nabla_{\overline{\alpha}}{\left(
\begin{array}{c}
\eta^{\beta}  \\
 \rho  \\
\end{array} 
\right)}
 = 
 \left(
\begin{array}{c}
\nabla_{\overline{\alpha}}\eta^{\beta}\\
\nabla_{\overline{\alpha}}\rho-2P_{\overline{\alpha}\beta}\eta^{\beta}  \\
\end{array} 
\right) \\
&\nabla_{\overline{\alpha}}{\left(
\begin{array}{c}
\xi  \\
\mu _{\beta}  \\
\end{array} 
\right)}
 = 
 \left(
\begin{array}{c}
\nabla_{\overline{\alpha}}\xi  \\
\nabla_{\overline{\alpha}}\mu_{\beta} +2P_{\overline{\alpha}\beta}\xi  \\
\end{array} 
\right),
\end{align} 
where $P_{\alpha\beta}=\Pi^a_{\alpha}\Pi^b_{\beta}P_{ab}$ and $P_{\overline{\alpha}\beta}=\overline{\Pi}^a_{\overline{\alpha}}\Pi^b_{\beta}P_{ab}$. Using these formulae for the tractor connection a series of straightforward computations yield the following:
\begin{align*}
&\nabla_{\gamma}W_{\alpha}^{A}= -2P_{\gamma\alpha}X^A &&\nabla_{\overline{\gamma}}W_{\overline{\alpha}}^{\overline{A}}=-2P_{\overline{\gamma}\overline{\alpha}}X^{\overline{A}} \\
&\nabla_{\gamma}W_{\overline{\alpha}}^{\overline{A}}=-2P_{\overline{\gamma}\overline{\alpha}}X^{\overline{A}} && \nabla_{\overline{\gamma}}W_{\alpha}^A = -2P_{\overline{\gamma}\alpha}X^A 
\end{align*}
\begin{align*}
&\nabla_{\gamma}X^{A}= W_{\gamma}^A &&\nabla_{\overline{\gamma}}X^{\overline{A}}=W_{\overline{\gamma}}^{\overline{A}} \\
&\nabla_{\gamma}X^{\overline{A}}=0 &&\nabla_{\overline{\gamma}}X^A=0 
\end{align*}
\begin{align*}
& \nabla_{\gamma}Y_{A}= 2Z_A^{\alpha}P_{\gamma\alpha} && \nabla_{\overline{\gamma}}Y_{\overline{A}}= 2Z_{\overline{A}}^{\overline{\alpha}}P_{\overline{\gamma}\overline{\alpha}} \\
&  \nabla_{\gamma}Y_{\overline{A}}= 2Z_{\overline{A}}^{\overline{\alpha}}P_{\gamma\overline{\alpha}} && \nabla_{\overline{\gamma}}Y_{{A}}= 2Z_{{A}}^{{\alpha}}P_{\overline{\gamma}{\alpha}}
\end{align*}
\begin{align*}
& \nabla_{\gamma}Z_A^{\alpha} =-\delta^{\alpha}_{\gamma}Y_A && \nabla_{\overline{\gamma}}Z_{\overline{A}}^{\overline{\alpha}} =-\delta^{\overline{\alpha}}_{\overline{\gamma}}Y_{\overline{A}} \\
& \nabla_{\gamma}Z_{\overline{A}}^{\overline{\alpha}} =0 && \nabla_{\overline{\gamma}}Z_A^{\alpha} =  0
\end{align*} 
as well as
\begin{align*}
& \nabla_cW^{\mathscr{A}}_a= -P_{ab}X^{\mathscr{A}}+J^b_cP_{ba}iX^A \\
& \nabla_cX^{\mathscr{A}}= W^{\mathscr{A}}_c \\
& \nabla_cY_{\mathscr{A}}=  P_{ca}Z_{\mathscr{A}}^a-J_c^bP_{ba}iZ_{\mathscr{A}}^a \\
& \nabla_cZ_{\mathscr{A}}^a= -\delta^a_cY_{\mathscr{A}}.
\end{align*}

\subsection{The Thomas D-operator}
The Thomas $D$-operator $D_{\mathscr{A}}:\mathcal{E}^{\bullet}(w,w')\rightarrow \mathcal{E}_{\mathscr{A}}^{\bullet} (w-1,w')$ is a c-projectively invariant operator.  
Here $\mathcal{E}^{\bullet}$ denotes any tractor bundle constructed tensorially out of $\mathcal{E}^{\mathscr{A}}$, and $\mathcal{E}_{\mathscr{A}}$. For our purposes it will be sufficient to explicitly describe the action of the Thomas $D$-operator on sections $s$ of $\mathcal{E}(w,w')$. In a splitting the Thomas $D$-operator is given by
\begin{align*}
& D_{\mathscr{A}} s \colonequals \left(
\begin{array}{cc}
w s \\
\nabla_{a} s   \\
\end{array} 
\right)
 =Y_{\mathscr{A}} w s + Z^{a}_{\mathscr{A}} \nabla_{a} s.
\end{align*}

\subsection{The metricity bundle}
An {\em almost pseudo-Hermitian manifold} is a triple $(M,J,g)$ where $(M,J)$ is an almost complex manifold and $g$ is a Hermitian metric for $J$, i.e. $g_{ab}J^a_cJ^b_d=g_{cd}$. An almost pseudo-Hermitian manifold is called a {\em pseudo-Hermitain manifold} if $J$ is integrable. Recall the {\em K\"{a}hler form} of an almost pseudo-Hermitian manifold is the 2-form $\omega_{ab}=J^c_ag_{cb}$, which clearly satisfies $\omega_{ab}J^a_cJ^b_d=\omega_{cd}$ and $\omega_{ab}\omega^{bc}=-\delta^c_a$, where $\Omega^{bc}\colonequals J^c_a g^{ba}$ is the {\em Poisson bivector}. The almost Hermitian manifold is said to be {\em almost pseudo-K\"{a}hler} if the K\"{a}hler form, $\omega$, is closed. We can also view a Hermitian metric $g_{ab}$ as a real non-degenerate section $g_{\alpha\overline{\beta}} =\Pi_{\alpha}^a\overline{\Pi}^b_{\overline{\beta}}g_{ab}$ of $\mathcal{E}_{\alpha{\overline{\beta}}}$. 

Now we consider the bundle $\mathcal{E}^{A\overline{B}}$ and its real subbundles $\mathcal{H}^*$ (which, following \cite{CG4}, we term the {\em metricity bundle}) and its skew counterpart $\mathcal{P}^*$. $\mathcal{H}^*$  and $\mathcal{P}^*$ can also be viewed as subbundles of $
\mathcal{E}^{(\mathscr{AB})}$ and $\mathcal{E}^{ \mathscr{[AB]} }$, respectively, whose sections are Hermitian with respect to the almost complex structure, $J^{\mathscr{A}}_{\mathscr{B}}$, on the tractor bundle $\mathcal{E}^{\mathscr{A}}$. Note that $J^{\mathscr{A}}_{\mathscr{B}}$ gives an isomorphism between $\mathcal{H}^*$  and $\mathcal{P}^*$. In a splitting we have 
\begin{align*}
\mathcal{E}^{A\overline{B}} & = \mathcal{E}^{\alpha\overline{\beta}}(-1,-1) \oplus {\mathcal{E}^{\alpha}(-1,-1)} \oplus \mathcal{E}^{\overline{\beta}}(-1,-1) \oplus \mathcal{E}(-1,-1) \\
\mathcal{H}^*& =\operatorname{Herm}(T^*M)\otimes\mathcal{E}(-1,-1)_{\mathbb{R}} \oplus \mathcal{E}^a(-1,-1)_{\mathbb{R}} \oplus \mathcal{E}(-1,-1)_{\mathbb{R}}, \\
\mathcal{P}^*& =\operatorname{SkewHerm}(T^*M)\otimes\mathcal{E}(-1,-1)_{\mathbb{R}} \oplus \mathcal{E}^a(-1,-1)_{\mathbb{R}} \oplus \mathcal{E}(-1,-1)_{\mathbb{R}}
\end{align*}
where $\operatorname{Herm}(E)$ and $\operatorname{SkewHerm}(E)$ denotes the bundle of Hermitain metrics and Hermitian forms on a vector bundle $E$, respectively. We write sections $h^{A\overline{B}}\in \Gamma(\mathcal{E}^{A\overline{B}})$, $h^{\mathscr{AB}}\in\Gamma(\mathcal{H}^*)$, and $p^{\mathscr{AB}}\in\Gamma(\mathcal{P}^*)$ as 
\begin{align*}
h^{A\overline{B}}  = 
\left(
\begin{array}{cc}
\zeta^{\alpha\overline{\beta}} \\
\lambda ^{\alpha}   \ \ \  |   \ \ \   \mu^{\overline{\beta}}\\
\nu  \\
\end{array} 
\right) \ \ \ \ \ \ \ 
h^{\mathscr{A}\mathscr{B}}  = 
\left(
\begin{array}{cc}
\zeta^{ab} \\
\lambda ^{c}  \\
\nu  \\
\end{array} 
\right) \ \ \ \ \ \ \ 
p^{\mathscr{A}\mathscr{B}}  = 
\left(
\begin{array}{cc}
\pi^{ab} \\
\iota ^{c}  \\
\nu  \\
\end{array} 
\right)
\end{align*}
where we can identify the slots of $h^{\mathscr{A}\mathscr{B}}$ with real slots of $\mathcal{E}^{A\overline{B}}$: 
\begin{align*}
\overline{\ze^{\overline{\gamma}\beta}}=\ze^{\overline{\beta}\gamma}, \ \ \ \ \ \ \overline{\lambda^{\alpha}}=\mu^{\overline{\alpha}}, \ \ \ \ \ \ \operatorname{and} \ \ \ \ \ \  \overline{\nu}=\nu.
\end{align*}
We also see that the slots of $h^{\mathscr{A}\mathscr{B}}$ are related to the slots of $p^{\mathscr{A}\mathscr{B}}=J^{\mathscr{B}}_{\mathscr{C}}h^{\mathscr{A}\mathscr{C}}$ by
\begin{align*}
\pi^{ab}=J^b_c\ze^{ca},  \ \ \ \ \ \ \operatorname{and} \ \ \ \ \ \  \iota^{a}=J^a_b\lambda^{b}.
\end{align*}
We will also need to work with the dual bundles, namely $\mathcal{E}_{A\overline{B}}$ and its real subbundles $\mathcal{H}$ and $\mathcal{P}$. In a splitting these decompose into the following direct sums 
\begin{align*}
\mathcal{E}_{A\overline{B}} & = \mathcal{E}_{\alpha\overline{\beta}}(1,1) \oplus {\mathcal{E}_{\alpha}(1,1)} \oplus \mathcal{E}_{\overline{\beta}}(1,1) \oplus \mathcal{E}(1,1) \\
\mathcal{H} & =\operatorname{Herm}(TM)\otimes\mathcal{E}(1,1)_{\mathbb{R}} \oplus \mathcal{E}_a(1,1)_{\mathbb{R}} \oplus \mathcal{E}(1,1)_{\mathbb{R}}, \\
\mathcal{P} & =\operatorname{SkewHerm}(TM)\otimes\mathcal{E}(1,1)_{\mathbb{R}} \oplus \mathcal{E}_a(1,1)_{\mathbb{R}} \oplus \mathcal{E}(1,1)_{\mathbb{R}}.
\end{align*}
and we write sections $h_{A\overline{B}}\in \Gamma(\mathcal{E}_{A\overline{B}})$ and $h_{\mathscr{AB}}\in\Gamma(\mathcal{H}^*)$ as 
\begin{align*}
h_{A\overline{B}}  = 
\left(
\begin{array}{cc}
\phi_{\alpha\overline{\beta}} \\
\lambda _{\alpha}   \ \ \  |   \ \ \   \mu^{\overline{\beta}}\\
\tau  \\
\end{array} 
\right) \ \ \ \ \ \ \ 
h_{\mathscr{A}\mathscr{B}}  = 
\left(
\begin{array}{cc}
\phi_{ab} \\
\lambda _{c}  \\
\tau  \\
\end{array} 
\right)
\end{align*}
The formulae for the tractor connection applied to $\mathcal{H}^*$ and $\mathcal{H}$, respectively, are given by:
\begin{align*}
\nabla_{c}^{\mathcal{T}} {h^{\mathscr{A}\mathscr{B}}} & = 
\nabla_{c}^{\mathcal{T}}
\left(
\begin{array}{cc}
\zeta^{a{b}} \\
\lambda ^{a} \\
\nu  \\
\end{array} 
\right) 
= \left(
\begin{array}{cc}
\nabla_{c} \zeta^{a{b}} + \delta_{c}^{(a} \lambda^{{b)}} + J_c^{(a}J_i^{b)}\lambda^i \\
\nabla _{c} \lambda^{a} + 2\delta ^{a}_{c}\nu - 2P_{c{b}}\zeta^{a{b}}   \\
\nabla_{c} \nu - P_{cb}\lambda^{b}\\
\end{array} 
\right), \\
\nabla_{c}^{\mathcal{T}} {h_{\mathscr{A}\mathscr{B}}} & = 
\nabla_{c}^{\mathcal{T}}
\left(
\begin{array}{cc}
\tau  \\
\lambda _{a}\\
\phi_{ab} \\
\end{array} 
\right)
= \left(
\begin{array}{cc}
\nabla_{c} \tau - 2 \lambda _{c} \\
\nabla _{c} \lambda _{a} + P_{ca} \tau - \phi_{ca}\\
\nabla_{c} \phi_{ab} + 2P_{c(b} \lambda_{a)} +2P_{ci}\lambda_jJ^i_{(a}J^j_{b)} \\
\end{array} 
\right). 
\end{align*}

We can pass from sections of $h_{\mathscr{AB}}\in\Gamma(\mathcal{H}^*)$ to sections of $h_{A\overline{B}}\in \Gamma(\mathcal{E}_{A\overline{B}})$, or vice versa, via the $\Pi$ and $\overline{\Pi}$ maps discussed earlier. The formulae for the tractor connection applied to sections of $\mathcal{E}^{A\overline{B}}(-1,-1)$ are given by:
\begin{align*}
\nabla_{\gamma}^{\mathcal{T}} {h^{A\overline{B}}} & = 
\nabla_{\gamma}^{\mathcal{T}}
\left(
\begin{array}{cc}
\zeta^{\alpha\overline{\beta}} \\
\lambda ^{\alpha}\ \ \ | \ \ \ \mu^{\overline{\alpha}}\\
\nu  \\
\end{array} 
\right) 
= \left(
\begin{array}{cc}
\nabla_{\gamma} \zeta^{\alpha\overline{\beta}} + \delta_{\gamma}^{\alpha} \mu^{\overline{\beta}} \\
\nabla _{\gamma} \lambda^{\alpha} + 2\delta ^{\alpha}_{\gamma}\nu - 2P_{\gamma\overline{\beta}}\zeta^{\alpha\overline{\beta}}  \ \ \ | \ \ \  \nabla_{\gamma}\mu^{\overline{\beta}}-2P_{\gamma\beta}\ze^{\beta\overline{\alpha}}  \\
\nabla_{\gamma} \nu - P_{\gamma\overline{\alpha}} \mu ^{\overline{\alpha}} - P_{\alpha\beta}\lambda^{\beta}\\
\end{array} 
\right), 
\end{align*}
\begin{align*}
\nabla_{\overline{\gamma}}^{\mathcal{T}} {h^{A\overline{B}}} & = 
\nabla_{\overline{\gamma}}^{\mathcal{T}}
\left(
\begin{array}{cc}
\zeta^{\alpha\overline{\beta}} \\
\lambda ^{\alpha}\ \ \ | \ \ \ \mu^{\overline{\alpha}}\\
\nu  \\
\end{array} 
\right) 
= \left(
\begin{array}{cc}
\nabla_{\overline{\gamma}} \zeta^{\alpha\overline{\beta}} + \lambda^{{\alpha}}\delta_{\overline{\gamma}}^{\overline{\beta}}  \\
\nabla _{\gamma} \lambda^{\alpha}  - 2P_{\overline{\gamma}\overline{\beta}}\zeta^{\alpha\overline{\beta}}  \ \ \ | \ \ \  \nabla_{\overline{\gamma}}\mu^{\overline{\alpha}}+ 2\nu\delta_{\overline{\gamma}}^{\overline{\alpha}}-P_{\overline{\gamma}\beta}\ze^{\beta\overline{\alpha}}  \\
\nabla_{\overline{\gamma}} \nu - 2P_{\overline{\gamma}\overline{\alpha}} \mu ^{\overline{\alpha}} - 2P_{\overline{\gamma}\overline{\beta}}\lambda^{\beta}\\
\end{array} 
\right). \\
\end{align*}
The formulae for the tractor connection applied to sections of $\mathcal{E}_{A\overline{B}}(1,1)$ are given by:
\begin{align*}
\nabla_{\gamma}^{\mathcal{T}} {h_{A\overline{B}}} & = 
\nabla_{\gamma}^{\mathcal{T}}
\left(
\begin{array}{cc}
\phi_{\alpha\overline{\beta}} \\
\lambda_{\alpha}\ \ \ | \ \ \ \mu_{\overline{\alpha}}\\
\tau  \\
\end{array} 
\right) 
= \left(
\begin{array}{cc}
\nabla_{\gamma}\tau - \lambda_{\gamma} \\
\nabla _{\gamma} \lambda_{\alpha} +2 P_{\gamma\alpha}\tau   \ \ \ | \ \ \  \nabla_{\gamma}\mu_{\overline{\beta}} +2P_{\gamma\overline{\alpha}}\tau-\phi_{\gamma\overline{\alpha}} \\
\nabla_{\gamma} \phi_{\alpha\overline{\beta}}+ P_{\gamma\overline{\beta}}\lambda_{\alpha} + P_{\gamma\alpha}\mu_{\overline{\beta}}\\
\end{array} 
\right), 
\end{align*}
\begin{align*}
\nabla_{\overline{\gamma}}^{\mathcal{T}} {h_{A\overline{B}}} & = 
\nabla_{\overline{\gamma}}^{\mathcal{T}}
\left(
\begin{array}{cc}
\tau  \\
\lambda _{\alpha}\ \ \ | \ \ \ \mu_{\overline{\alpha}}\\
\phi_{\alpha\overline{\beta}} \\
\end{array} 
\right) 
= \left(
\begin{array}{cc}
\nabla_{\overline{\gamma}}\tau-\mu_{\overline{\gamma}}\\
\nabla _{\gamma} \lambda_{\alpha}  + 2P_{\overline{\gamma}\alpha}\tau-\phi_{\alpha\overline{\gamma}}  \ \ \ | \ \ \  \nabla_{\overline{\gamma}}\mu_{\overline{\alpha}} + 2P_{\overline{\alpha}\overline{\gamma}}\tau  \\
\nabla_{\overline{\gamma}} \phi_{\alpha\overline{\beta}} +P_{\overline{\gamma}\overline{\beta}}\lambda_{\alpha} + P_{\overline{\gamma}\alpha}\mu_{\overline{\beta}} \\
\end{array} 
\right). \\
\end{align*}

\subsection{C-projective BGG equations}\label{cprojbgg}

Given a Cartan geometry $(\mathcal{P}\twoheadrightarrow M, \omega)$ of type $(G,P)$ and a $G$-representation $\mathbb{V}$, we form a tractor bundle $\mathcal{V}=\mathcal{P}\times_P \mathbb{V}$. Then, via the corresponding tractor connection, we can form the exterior covariant derivative, $d^{\nabla}$, on
$\mathcal{V}$-valued forms to obtain the de Rham sequence twisted by
$\mathcal{V}$.
\[
0 \xrightarrow{} \mathcal{V} \xrightarrow{d^{\nabla}} \mathcal{V} \otimes \mathcal{E}_a \xrightarrow{d^{\nabla}} \mathcal{V} \otimes \mathcal{E}_{[ab]} \xrightarrow{d^{\nabla}} ...
\]
Then, via the canonical map
\[
\dagger : \mathcal{E}_a \rightarrow \operatorname{End}(\mathcal{V}), \ \ \ \ \operatorname{given\ explicitly\ by} \ \ \ \ \alpha_a \mapsto X^{\mathscr{B}}Z_{\mathscr{A}}^a \alpha_a
\]
in the case when $\mathcal{V} = \mathcal{E}^{\mathscr{A}}$ , one can construct a special case of the Kostant codifferential $\partial^{*}$, that gives a complex of natural bundle maps on $\mathcal{V}$-valued differential forms going in the opposite direction to the twisted de Rham sequence,
\[
0 \xleftarrow{\partial^{*}} \mathcal{V} \xleftarrow{\partial^{*}}  \mathcal{V} \otimes \mathcal{E}_a \xleftarrow{\partial^{*}}  \mathcal{V} \otimes \mathcal{E}_{[ab]} \xleftarrow{\partial^{*}}  ...
\]

The homology of this sequence gives natural subquotient bundles 
\begin{align*}
H_{k}(M,\mathcal{V})\colonequals \operatorname{ker}(\partial^{*})/\operatorname{im}(\partial^{*}).
\end{align*}
There are natural bundle projections $\Pi_{k}:
\operatorname{ker}(\partial^*) \subseteq \mathcal{V} \otimes
\mathcal{E}_{[ab...c]} \rightarrow H_{k}(M, \mathcal{V})$, from the
indicated $\mathcal{V}$-valued $k$-forms to the $k$th BGG homology.
Given a smooth section $\rho$ of $H_{k}(M, \mathcal{V})$ there is a
unique smooth section $L_{k}(\rho)$ of
$\operatorname{ker}(\partial^*)\subseteq \mathcal{V}\otimes
\mathcal{E}_{[a...b]}$ such that $\Pi_{k}(L_{k}(\rho))=\rho$ and
$\partial^{*}(d^{\nabla^{\mathcal{V}}}( L_{k}(\rho)))$$=0$.  This
characterizes a projectively invariant differential operator $L$
called the BGG splitting operator, or just the {\em splitting
  operator}. We can then define the $k$th BGG operator
$\Theta_{k}:H_{k}(M, \mathcal{V}) \rightarrow H_{k+1}(M, \mathcal{V})$
by $\rho \mapsto \Pi_{k+1}(d^{\nabla^{\mathcal{V}}}(L_{k}(\rho)))$. It
follows from these definitions that parallel sections of $\mathcal{V}$
are equivalent to (via $\Pi_0$ and $L_0$) a special class of so-called
{\em normal} solutions of the first BGG operator $\Theta_{0}:H_{0}(M,
\mathcal{V}) \rightarrow H_{1}(M, \mathcal{V})$ associated with
$\mathcal{V}$. Equations induced on the sections of $H_0(M,
\mathcal{V})$ by the BGG operator $\Theta_0$ are known as ({\em
  first}) {\em BGG equations}. Note that the BGG sequence, given by
the BGG operators, is not a complex in general, unless the connection
$\nabla^{\mathcal{V}}$ is flat. Note that a parallel section of a tractor bundle is necessarily in the image of the splitting operator. Next, we determine the first BGG equation and splitting operator corresponding to the c-projective metricity bundle.

\begin{proposition} \label{cproj3.2}
Let $(M, J, \bD)$ be an almost c-projective manifold. The first BGG operator $\Theta_0: H_0(M,\mathcal{H}^*) \rightarrow H_1(M,\mathcal{H}^*)$, induces the following projectively invariant first order equation on $\operatorname{Herm}(T^*M)\otimes \mathcal{E}(-1,-1)_{\mathbb{R}}$,
\begin{align}\label{cprojmeteq}
\nabla_{c} \zeta^{ab} - \frac{1}{m}\delta_{c}^{(a}\nabla_{d}\zeta^{b)d} - \frac{1}{m}J_{c}^{(b}J^{a)}_e\nabla_{d}\ze^{ed} = 0.
\end{align}
\end{proposition}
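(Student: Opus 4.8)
The plan is to unwind the three ingredients of the first BGG machinery introduced above—the bottom homology $H_0(M,\mathcal{H}^*)$, the splitting operator $L=L_0$, and the projection $\Pi_1$—for the metricity bundle $\mathcal{V}=\mathcal{H}^*$, using the explicit tractor connection formulae recorded for $\mathcal{H}^*$ and, when convenient, for its complexification $\mathcal{E}^{A\overline{B}}$. First I would identify $H_0(M,\mathcal{H}^*)$: since $\partial^*$ vanishes on $0$-forms, $H_0$ is the quotient of $\mathcal{H}^*$ by $\operatorname{im}\partial^*$, which is exactly the top slot $\operatorname{Herm}(T^*M)\otimes\mathcal{E}(-1,-1)_{\mathbb{R}}$. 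Thus a potential solution is precisely a section $\zeta^{ab}$ of this bundle and $\Pi_0$ reads off the top slot.

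Next I would compute $L(\zeta)$. Writing a general lift as $h^{\mathscr{A}\mathscr{B}}=(\zeta^{ab},\lambda^c,\nu)$, the normalization $\partial^*(d^{\nabla}L(\zeta))=0$ determines $\lambda$ and $\nu$ algebraically. Here $\partial^*$ is, up to a nonzero constant, the Kostant codifferential induced by the algebraic action encoded in $\dagger:\alpha_a\mapsto X^{\mathscr{B}}Z^a_{\mathscr{A}}\alpha_a$, and on the $\mathcal{H}^*$-valued $1$-form $d^{\nabla}L(\zeta)$ its effect is to take suitable traces of the slots of $\nabla^{\mathcal{T}}_c h^{\mathscr{A}\mathscr{B}}$. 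The cleanest way to extract these traces is to pass to the complexified picture $\mathcal{E}^{A\overline{B}}$ and use the formulae for $\nabla^{\mathcal{T}}_{\gamma}h^{A\overline{B}}$ and $\nabla^{\mathcal{T}}_{\overline{\gamma}}h^{A\overline{B}}$, whose top-slot components are $\nabla_{\gamma}\zeta^{\alpha\overline{\beta}}+\delta^{\alpha}_{\gamma}\mu^{\overline{\beta}}$ and $\nabla_{\overline{\gamma}}\zeta^{\alpha\overline{\beta}}+\lambda^{\alpha}\delta^{\overline{\beta}}_{\overline{\gamma}}$. Contracting the holomorphic form index against the holomorphic tractor index and using $\delta^{\alpha}_{\alpha}=m$ in complex dimension $m$, the vanishing of $\partial^*$ forces $\mu^{\overline{\beta}}=-\tfrac{1}{m}\nabla_{\epsilon}\zeta^{\epsilon\overline{\beta}}$ and, by conjugation, $\lambda^{\alpha}=-\tfrac{1}{m}\nabla_{\overline{\epsilon}}\zeta^{\alpha\overline{\epsilon}}$. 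Only these slots enter the first BGG operator, so the explicit value of $\nu$ is not needed.

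Finally I would compute $\Theta_0(\zeta)=\Pi_1(d^{\nabla}L(\zeta))$. By the normalization $d^{\nabla}L(\zeta)\in\ker\partial^*$, so $\Pi_1$ is the reduction of its top slot modulo $\operatorname{im}\partial^*$, i.e.\ passage to the complex-trace-free part. Substituting the values of $\lambda,\mu$ into the top slot $\nabla_{\gamma}\zeta^{\alpha\overline{\beta}}+\delta^{\alpha}_{\gamma}\mu^{\overline{\beta}}$ yields $\nabla_{\gamma}\zeta^{\alpha\overline{\beta}}-\tfrac{1}{m}\delta^{\alpha}_{\gamma}\nabla_{\epsilon}\zeta^{\epsilon\overline{\beta}}$, manifestly the trace-free part of $\nabla\zeta$. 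Re-expressing this in real indices via the $\Pi$ and $\overline{\Pi}$ maps of Section \ref{background} and symmetrizing reproduces both correction terms of \eqref{cprojmeteq}: the single complex trace unpacks, through identities such as $\Pi^{\alpha}_a\Pi^b_{\alpha}=\tfrac{1}{2}(\delta^b_a-iJ^b_a)$, into the $\delta$-trace term and the $J$-trace term, both with coefficient $-\tfrac{1}{m}$. Since the resulting tensor is symmetric, Hermitian and trace-free it is the harmonic representative, hence equals $\Theta_0(\zeta)$; c-projective invariance is automatic, as $\nabla^{\mathcal{T}}$, $\partial^*$, $L$ and $\Pi_1$ are all c-projectively invariant.

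I expect the main obstacle to be the determination of the splitting operator: correctly identifying the codifferential $\partial^*$ on $\mathcal{H}^*$-valued $1$-forms and verifying that its kernel condition computes exactly the complex trace, so that the constant is $-\tfrac{1}{m}$ (governed by the complex dimension $m$) rather than something tied to the real dimension $2m$, all while keeping track of the real Hermitian subbundle structure and the $J$-twisting as one passes between the real slots $(\zeta^{ab},\lambda^c,\nu)$ and the complexified slots $(\zeta^{\alpha\overline{\beta}},\lambda^{\alpha}\,|\,\mu^{\overline{\beta}},\nu)$. Once the codifferential is pinned down, identifying the trace-free projection with \eqref{cprojmeteq} is a routine index computation.
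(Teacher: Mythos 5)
Your proposal is correct and takes essentially the same route as the paper: identify $H_0(M,\mathcal{H}^*)$ with the top slot, impose the normalization $\partial^*(d^{\nabla}L(\zeta))=0$ on a lift $(\zeta^{ab},\lambda^c,\nu)$ to solve for the middle slot as $-\tfrac{1}{m}$ times the divergence of $\zeta$, substitute back into the top slot of the tractor derivative, and read off the trace-free part as the first BGG operator. The only cosmetic differences are that you run the trace in complexified indices (the paper traces directly in real indices, where the $\delta$- and $J$-terms combine to give the same factor $m$ and hence the same coefficient $-\tfrac{1}{m}$) and that you omit the bottom slot $\nu$, which the paper computes explicitly (it is needed for its later Corollary on the full splitting operator) but which, as you correctly note, does not enter equation \eqref{cprojmeteq}.
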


\begin{proof}
Let $h^{\mathscr{AB}}\in\Gamma(\mathcal{E}^{(\mathscr{AB})})$. Then we compute $\nabla_{c}^{\mathcal{T}} {h^{\mathscr{AB}}}$.
\begin{align*}
\nabla_{c}^{\mathcal{T}} {h^{\mathscr{AB}}} & = 
\nabla_{c}^{\mathcal{T}}
\left(
\begin{array}{cc}
\zeta^{ab} \\
\lambda ^{a}\\
\rho  \\
\end{array} 
\right) 
= \left(
\begin{array}{cc}
\nabla_{c} \zeta^{ab} + \delta_{c}^{(a} \lambda ^{b)} + J_c^{(a}J^{b)}_i\lambda^i\\
\nabla _{c} \lambda ^{a} + 2\delta ^{a}_{c}\rho - 2P_{cb}\zeta^{ab} \\
\nabla_{c} \rho - P_{ca} \lambda ^{a} \\
\end{array} 
\right). \\
\end{align*}
Then $\partial^*(\nabla_c^{\mathcal{T}}h^{\mathscr{AB}})=0$ tells us that the slots of $Z_{\mathscr{D}}^cX^{(\mathscr{A}}\nabla_c^{\mathcal{T}}h^{\mathscr{B})\mathscr{D}}$ are trace-free\footnote{See the proof of Proposition 14 of \cite{CG4} for a more details.}, i.e. we have the following system of equations:
\begin{align}\label{meteqalternateform}
\operatorname{trace}(\nabla_{c} \zeta^{ab} & + \delta_{c}^{(a} \lambda ^{b)} + J_c^{(a}J^{b)}_i\lambda^i )=0, \\
\operatorname{trace}(\nabla_{c} \lambda^{a} & - 2P_{cb} \zeta^{ab} + 2\delta^{a}_{c}\rho)=0. \nonumber
\end{align}
Therefore,
\begin{align*}
\lambda^{a} & = \frac{-1}{m}\nabla_{b} \zeta^{ab}, \\
\rho & = \frac{1}{2m} P_{ba} \zeta^{ab} + \frac{1}{4m^2} \nabla _{a} \nabla_{b} \zeta^{ab}.
\end{align*}
Thus a Hermitian form, $h$, on the cotractor bundle in the image of the splitting operator is of the form
\begin{align*}
{h^{\mathscr{AB}}} = L(\zeta^{ab}) = 
\left(
\begin{array}{cc}
\zeta^{ab} \\
\frac{-1}{m}\nabla_{b} \zeta^{ab} \\
\frac{1}{2m} P_{ba} \zeta^{ab} + \frac{1}{4m^2} \nabla _{a} \nabla_{b} \zeta^{ab} \\
\end{array} 
\right).
\end{align*}
Substituting gives the following first-order BGG equation on $\operatorname{Herm}(T^*M)\otimes \mathcal{E}(-1,-1)_{\mathbb{R}}$

\begin{align}
\operatorname{trace-free}(\nabla_c\ze^{ab}) = 0 \Longleftrightarrow
\nabla_{c} \zeta^{ab} - \frac{1}{m}\delta_{c}^{(a}\nabla_{d}\zeta^{b)d} - \frac{1}{m}J_{c}^{(b}J^{a)}_e\nabla_{d}\ze^{ed} = 0.
\end{align}
C-projective invariance follows from a straightforward computation. $H_0(M, \mathcal{H}^*) =\operatorname{Herm}(T^*M)\otimes \mathcal{E}(-1,-1)_{\mathbb{R}}$ follows from applications of the general BGG machinery of \cite{CSS} and this particular case is treated in Theorem 3.3 of \cite{CGH2}. So we have given the explicit form of $\Theta_0(\zeta^{ab})\colonequals \Pi_1(d^{\nabla}L(\zeta^{ab}))=0$, which is  the c-projective metrizability equation (\ref{cprojmeteq}).  
\end{proof}
Applying the procedure above to the bundle $\mathcal{H}$ yields $H_0(M,\mathcal{H})=\mathcal{E}(1,1)$ and also gives an explicit formula for the splitting operator $L:\mathcal{E}(1,1) \rightarrow \mathcal{H}$. For later reference, we give the formulae (cf. with Section 3.5 of \cite{CG4}) for the BGG splitting operators mapping into $\mathcal{H}^*$ and $\mathcal{H}$, respectively, in the following corollary,
\begin{corollary} \label{cproj3.3}
Let $\tau \in \Gamma (\mathcal{E}(1,1)_{\mathbb{R}})$ and $\ze \in \Gamma (\operatorname{Herm}(T^*M)\otimes \mathcal{E}(-1,-1)_{\mathbb{R}})$. Then their images under their respective splitting operators, both denoted by $L$, are given by
\begin{align*}
L(\tau)  &= 
\left(
\begin{array}{cc}
\tau  \\
\frac{1}{2}\nabla_{a}\tau \\
\frac{1}{2}(\delta^i_{(b}\delta^j_{c)}+ J^i_{(b}J^j_{c)})(\frac{1}{2}\nabla_{i}\nabla_{j}\tau + P_{ij}\tau) \\
\end{array} 
\right) 
\end{align*}
and
\begin{align*}
L(\zeta^{ab}) &= 
\left(
\begin{array}{cc}
\zeta^{ab} \\
\frac{-1}{m}\nabla_{i} \zeta^{ic}  \\
\frac{1}{2m} P_{ij} \zeta^{ij} + \frac{1}{4m^2} \nabla _{i} \nabla_{j} \zeta^{ij}  \\
\end{array} 
\right).
\end{align*}
\end{corollary}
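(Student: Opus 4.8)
The plan is to invoke the defining characterization of the first BGG splitting operator recalled in Section \ref{cprojbgg}: for $\mathcal{V}=\mathcal{H}$ with $H_0(M,\mathcal{H})=\mathcal{E}(1,1)$, the section $L(\tau)$ is the unique element of $\Gamma(\mathcal{H})$ projecting to $\tau$ under $\Pi_0$ and satisfying the normalization $\partial^*(\nabla^{\mathcal{T}}L(\tau))=0$, and dually for $\mathcal{V}=\mathcal{H}^*$. The formula for $L(\zeta^{ab})$ is already obtained in the proof of Proposition \ref{cproj3.2}, so the only substantive new computation is that for $L(\tau)$; the two are then simply recorded together.

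For $L(\tau)$ I would write $L(\tau)=(\tau;\lambda_a;\phi_{ab})$ with $\lambda_a$ and $\phi_{ab}$ to be determined and $\phi_{ab}$ constrained to lie in $\operatorname{Herm}(TM)\otimes\mathcal{E}(1,1)_{\mathbb{R}}$, and apply the tractor connection on $\mathcal{H}$ displayed in the metricity bundle subsection:
\begin{align*}
\nabla_c^{\mathcal{T}}\left(\begin{array}{c}\tau\\\lambda_a\\\phi_{ab}\end{array}\right)=\left(\begin{array}{c}\nabla_c\tau-2\lambda_c\\\nabla_c\lambda_a+P_{ca}\tau-\phi_{ca}\\\nabla_c\phi_{ab}+2P_{c(b}\lambda_{a)}+2P_{ci}\lambda_jJ^i_{(a}J^j_{b)}\end{array}\right).
\end{align*}
Exactly as in the proof of Proposition \ref{cproj3.2}, the codifferential $\partial^*$ --- built from the algebraic map $\dagger$, which raises a cotractor slot by contraction against $X$ and $Z$ --- detects on each slot precisely the component that the normalization must annihilate. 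Imposing $\partial^*(\nabla^{\mathcal{T}}L(\tau))=0$ then reduces slot-by-slot: the top slot forces $\nabla_c\tau-2\lambda_c=0$, hence $\lambda_a=\tfrac12\nabla_a\tau$, while the middle slot forces the Hermitian-symmetric part of $\nabla_c\lambda_a+P_{ca}\tau-\phi_{ca}$ to vanish.

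Substituting $\lambda_a=\tfrac12\nabla_a\tau$ and solving for $\phi$ subject to its membership in $\operatorname{Herm}(TM)$ yields the claimed formula, namely the Hermitian projector $\tfrac12(\delta^i_{(a}\delta^j_{b)}+J^i_{(a}J^j_{b)})$ applied to $\tfrac12\nabla_i\nabla_j\tau+P_{ij}\tau$ (the statement's indices $b,c$ arising by a harmless relabeling); this projector is nothing but the orthogonal projection of a $2$-tensor onto its $J$-Hermitian part, so its presence is forced by $\phi_{ab}\in\Gamma(\operatorname{Herm}(TM)\otimes\mathcal{E}(1,1)_{\mathbb{R}})$ rather than being an extra hypothesis. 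I would then record $L(\zeta^{ab})$ verbatim from Proposition \ref{cproj3.2}, observing that its derivation is the index-raised dual of the computation just performed. The step I expect to be the main obstacle is pinning down the precise action of $\partial^*$ on the symmetric Hermitian square $\mathcal{H}$ of the cotractor bundle --- in particular, checking that it extracts exactly the Hermitian-symmetrized and suitably traced components, with the normalizations producing the factors $\tfrac12$, $\tfrac14$ appearing in $L(\zeta)$ and the coefficient $\tfrac12$ in $\lambda_a$; the connection action and the algebra of the $J$-projectors are then routine, and both c-projective invariance and the identification $H_0(M,\mathcal{H})=\mathcal{E}(1,1)$ follow from the general BGG machinery of \cite{CSS} invoked in Proposition \ref{cproj3.2}.
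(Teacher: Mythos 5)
Your proposal is correct and takes essentially the same route as the paper: the paper gives no separate proof of the corollary, but derives it by ``applying the procedure above'' (the normalization $\partial^*(\nabla^{\mathcal{T}}L)=0$ from the proof of Proposition \ref{cproj3.2}) to the bundle $\mathcal{H}$ and recording $L(\zeta^{ab})$ verbatim from that proposition, exactly as you do. Your slot-by-slot reduction --- full vanishing of the top slot giving $\lambda_a=\tfrac12\nabla_a\tau$, and vanishing of the Hermitian part of the middle slot giving $\phi_{ab}$ as the $J$-Hermitian projection of $\tfrac12\nabla_a\nabla_b\tau+P_{ab}\tau$ --- is the correct form of that procedure for the cotractor-type bundle $\mathcal{H}$ and reproduces the stated formula.
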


\subsection{Determinants}
We now describe several methods of taking determinants which are relevant to our purposes. Let $\mathcal{\epsilon}_{\alpha_1\cdots\alpha_m}\in\Gamma(\mathcal{E}_{[\alpha_1\cdots\alpha_m]}(m+1,0))$
denote the canonical section giving the identification $\mathcal{E}^{[\alpha_1\cdots\alpha_m]} \xrightarrow{\sim} \mathcal{E}(m+1,0)$. Then we get a well-defined notion of determinant for sections of $\mathcal{E}^{\alpha\overline{\beta}}(k,k)$ via the map:
\begin{align*}
\bdet : \mathcal{E}^{\alpha\overline{\beta}}(k,k) & \rightarrow \mathcal{E}(km+m+1,km+m+1) \\
\sigma^{\alpha\overline{\beta}} & \mapsto \frac{1}{m!}{\epsilon}_{\alpha_1\cdots\alpha_m}\epsilon_{\overline{\beta}_1\cdots\overline{\beta}_m}\sigma^{\alpha_1\overline{\beta}_1}\cdots\sigma^{\alpha_m\overline{\beta}_m}.
\end{align*}
For Hermitian sections of $\mathcal{E}^{\alpha\overline{\beta}}(k,k)$, $\bdet$ is valued in $\mathcal{E}(km+m+1,km+m+1)_{\mathbb{R}}$. The parallel c-projectively invariant tractor
\begin{align*}
\epsilon_{A_0\cdots A_m\overline{B}_0\cdots\overline{B}_m}\colonequals {\epsilon}_{\alpha_1\cdots\alpha_m}\epsilon_{\overline{\beta}_1\cdots\overline{\beta}_m} Y_{[A_0}Z^{\alpha_1}_{A_1}\cdots Z^{\alpha_m}_{A_m]}Y_{[\overline{B}_0}Z^{\overline{\beta}_1}_{\overline{B}_1}\cdots Z^{\overline{\beta}_m}_{\overline{B}_m]},
\end{align*}
which is the {\em (complex) c-projective tractor volume form}, provides a method for taking determinants of sections of $\mathcal{E}^{A\overline{B}}$ as follows,
\begin{align*}
\operatorname{det}: \mathcal{E}^{A\overline{B}} & \rightarrow \mathcal{E}(0,0) \\
h^{A\overline{B}} & \mapsto \frac{1}{(m+1)!}\epsilon_{A_0\cdots\ A_m\overline{B}_0\cdots\ \overline{B}_m}h^{A_0\overline{B}_0}\cdots h^{A_m\overline{B}_m}.
\end{align*}
This determinant is real-valued, i.e., valued in $\mathcal{E}(0,0)_{\mathbb{R}}$, for Hermitian sections of $\mathcal{E}^{A\overline{B}}$. 

Now, letting $\epsilon_{a_1\cdots a_m} \colonequals \Pi^{\alpha_1}_{a_1} \cdots \Pi^{\alpha_m}_{a_m} \epsilon_{\alpha_1\cdots\alpha_m}$ and $\overline{\epsilon}_{b_1\cdots b_m} \colonequals  \overline{\Pi}^{\overline{\beta}_1}_{b_1} \cdots \overline{\Pi}^{\overline{\beta}_m}_{b_m} \epsilon_{\overline{\beta}_1\cdots\overline{\beta}_m}$, we define 
\[
\epsilon^2_{a_1\cdots a_m b_1 \cdots b_m}  \colonequals {\epsilon}_{[a_1\cdots a_m} \overline{\epsilon}_{b_1\cdots b_m]}  \in\Gamma(\mathcal{E}_{[a_1\cdots a_m b_1 \cdots b_m]}\otimes\mathcal{E}(m+1,m+1)_{\mathbb{R}}).
\]
So $\epsilon^2_{a_1\cdots a_m b_1 \cdots b_m}$ is the canonical section identifying oriented real line bundles $\mathcal{E}^{[a_1\cdots a_mb_1\cdots b_m]}\xrightarrow{\sim}\mathcal{E}(m+1,m+1)_{\mathbb{R}}$. Observe that this volume form gives a notion of determinant on sections of $\operatorname{SkewHerm}(T^*M)\otimes \mathcal{E}(k,k)_{\mathbb{R}}$ defined by,
\begin{align*}
\bdet : \operatorname{SkewHerm}(T^*M)\otimes \mathcal{E}(k,k)_{\mathbb{R}} & \rightarrow \mathcal{E}(km+m+1,km+m+1)_{\mathbb{R}} \\
\pi^{a{b}} & \mapsto \frac{1}{m!}\epsilon^2_{a_1\cdots a_m b_1 \cdots b_m} \pi^{a_1{b}_1}\cdots\pi^{a_m{b}_m}.
\end{align*}
Since $J$ identifies $\operatorname{SkewHerm}(T^*M)$ with $\operatorname{Herm}(T^*M)$, we can pull the determinant back to $\operatorname{Herm}(T^*M)$. That is, let $\bdet (\sigma^{ab})\colonequals \bdet (\pi^{ab})$ where $\pi^{ab}=J^b_c\sigma^{ac}$. 
Then, we define the {\em (real) c-projective tractor volume form} by
\begin{align*}\label{realvolume}
\epsilon_{\mathscr{A}_0\cdots\ \mathscr{A}_m\mathscr{B}_0\cdots\ \mathscr{B}_m}\colonequals \epsilon^2_{a_1\cdots a_m b_1 \cdots b_m}   \Pi^{A_0}_{[\mathscr{A}_0}\cdots \overline{\Pi}^{\overline{B}_m}_{\mathscr{B}_m]} \Pi_{\alpha_1}^{a_1}\cdots \overline{\Pi}_{\overline{\beta}_m}^{b_m}Y_{[A_0}Z^{\alpha_1}_{A_1}\cdots Z^{\alpha_m}_{A_m]}Y_{[\overline{B}_0}Z^{\overline{\beta}_1}_{\overline{B}_1}\cdots Z^{\overline{\beta}_m}_{\overline{B}_m]}.
\end{align*}
Since $Y_{[A_0}Z^{\alpha_1}_{A_1}\cdots Z^{\alpha_m}_{A_m]}Y_{[\overline{B}_0}Z^{\overline{\beta}_1}_{\overline{B}_1}\cdots Z^{\overline{\beta}_m}_{\overline{B}_m]}$, and hence $\epsilon_{A_0\cdots A_m\overline{B}_0\cdots\overline{B}_m}$, is fixed under conjugation, it follows that \eqref{realvolume} is indeed a section of $(\Lambda_{\mathbb{R}}^{2m+2}\mathcal{T}^*)$. Thus it provides a notion of determinant on the real subbundle $\mathcal{P}^*$ of $\mathcal{E}^{A\overline{B}}$ via
\begin{align*}
\operatorname{det}: \mathcal{P^*} & \rightarrow \mathcal{E}(0,0)_{\mathbb{R}} \\
p^{\mathscr{AB}} & \mapsto \frac{1}{(m+1)!}\epsilon_{\mathscr{A}_0\cdots\ \mathscr{A}_m\mathscr{B}_0\cdots\ \mathscr{B}_m}p^{\mathscr{A}_0\mathscr{B}_0}\cdots p^{\mathscr{A}_m\mathscr{B}_m}.
\end{align*}
Since $J$ (viewed as a complex structure at the tractor bundle level) identifies $\mathcal{P}^*$ with $\mathcal{H}^*$, we pull the determinant back to $\mathcal{H}^*$. That is, we let $\operatorname{det} (h^{\mathscr{AB}}) \colonequals \operatorname{det} (p^{\mathscr{AB}})$ where $p^{\mathscr{AB}}=J^{\mathscr{B}}_{\mathscr{C}}h^{\mathscr{AC}}$.

\subsection{Scalar curvature}

Let $(M,J)$ be an almost complex manifold equipped with an admissible Hermitian pseudo-Riemannian metric $g$. The volume form for $g$, $\operatorname{vol}_g\in\Gamma(\mathcal{E}(-2m-2))$, is parallel for any affine connection $\nabla$ preserving $g$ and hence any root of $\operatorname{vol}_g$ is parallel for $\nabla$ as well. In particular, $\tau\colonequals\operatorname{vol}_g^{-\frac{1}{m+1}}\in\Gamma(\mathcal{E}(1,1)_{\mathbb{R}})$ is parallel for the canonical connection $\nabla^g$ of $g$. It follows that $\ze^{ab}\colonequals \tau^{-1}g^{ab}\in\Gamma(\operatorname{Herm}(T^*M)\otimes \mathcal{E}(-2))$ is a solution to the metrizability equation. Further, in the splitting determined by the canonical connection for $g$ we see that 
\begin{align*}
L(\ze^{ab})  =
\left(
\begin{array}{cc}
\ze^{ab}  \\
0 \\
\frac{1}{2m}\ze^{ij}P_{ij} \\
\end{array} 
\right) =
\left(
\begin{array}{cc}
\tau^{-1}g^{ab}  \\
0 \\
\frac{\tau^{-1}}{2m}g^{ij}P_{ij} \\
\end{array} 
\right) =
\left(
\begin{array}{cc}
\tau^{-1}g^{ab}  \\
0 \\
\frac{\tau^{-1}R^g }{4m(m+1)}\\
\end{array} 
\right), 
\end{align*}
where $R^g$ denotes the scalar curvature of $g$. 
Then, up to a constant multiple, the determinant of $h^{\mathscr{AB}} = L(\tau^-1g^{ab})$ agrees with the scalar curvature $R^g$ of $g$, as was observed in Proposition 15 of \cite{CG4} and (in the special case of parallel sections of $\mathcal{E}^{A\overline{B}}$) in Proposition 4.8 of \cite{CEMN}. 
If $(M,J)$ can be realized as the interior of manifold with boundary $\overline{M}$ such that the c-projective structure of $\nabla^g$ admits a smooth extension to $\overline{M}$, then $\tau^{-1}g^{ab}$ and $R^g$ can be extended from the interior to $\overline{M}$, for details see Corollary 16 of \cite{CG4}. This is closely related to the questions we consider in section \ref{cprojresults}.

\section{Induced stratifications} \label{cprojresults}
\setcounter{theorem}{0}

We will show that given nondegeneracy of $L(\tau)$ or $L(\ze)$ where $\tau \in \Gamma (\mathcal{E}(1,1)_{\mathbb{R}})$ and $\ze \in \Gamma (\operatorname{Herm}(T^*M)\otimes \mathcal{E}(-1,-1)_{\mathbb{R}})$ is a solution to the metrizability equation \eqref{cprojmeteq} induces a stratification of the underlying almost c-projective manifold in analogous fashion to the projective cases considered in \cite{FloodGov}. The following theorems can be viewed as generalizations of curved orbit decomposition result of Theorem 3.3 in \cite{CGH2} where we are primarily using the more hands-on machinery developed in \cite{CG4}.

\begin{theorem}\label{cprojdensitynon-degenerate}
Let $(M,J,\bD)$ be an almost c-projective manifold with real dimension $2n$ equipped with a real density $\tau \in \Gamma ( \mathcal{E}(1,1)_{\mathbb{R}})$ such that $L(\tau)\in\Gamma(\mathcal{H})$ is non-degenerate as a Hermitian form on the tractor bundle. If $L(\tau)$ is definite then the zero locus $\mathcal{Z}(\tau)$ is empty and $(M,J,\bD,g)$ is Hermitian with metric $g_{bc}=(\delta^i_{(b}\delta^j_{c)}+ J^i_{(b}J^j_{c)})P_{ij}$, which is not, in general, admissible. If $L(\tau)$ has indefinite signature then $\mathcal{Z}(\tau)$ is either empty or it is a smoothly embedded separating real hypersurface such that the following hold: 
 \begin{enumerate}[(i)]
 \item $M$ is stratified by the strict sign of $\tau$ with curved orbit decomposition given by
$$
M=  \coprod\limits_{i \in \{+,0,-\}} M_i
$$
 where $\tau$ is positive,
zero ,and negative on $M_{+}$, $M_{0}$, and $M_-$, respectively. 
\item If $M$ is closed, then the open components $(M \backslash M_{\mp},J,\bD)$ are 
c-projective compactifications of $(M_{\pm}, J, \nabla^{\tau})$, with boundary $M_0$.
\item The open components $(M \backslash M_{\mp},J,g)$ are pseudo-Hermitian with metric $g_{bc}=(\delta^i_{(b}\delta^j_{c)}+ J^i_{(b}J^j_{c)})P_{ij}$. The metric $g$ is not admissible for $\bD$ in general, but if $L(\tau)$ is parallel, then it is admissible and further $g$ is K\"{a}hler-Einstein.
\item $M_0$ inherits a (possibly degenerate) almost CR structure of hypersurface type. 
\end{enumerate}
\end{theorem}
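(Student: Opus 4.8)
The plan is to read off every claim from the canonical tractor $X^{\mathscr{A}}$, which spans the distinguished line $\mathcal{E}(-1,0)\hookrightarrow\mathcal{E}^{\mathscr{A}}$ and satisfies $Y_{\mathscr{A}}X^{\mathscr{A}}=1$ and $Z^{a}_{\mathscr{A}}X^{\mathscr{A}}=0$. Using the explicit formula for $L(\tau)$ from Corollary \ref{cproj3.3}, contracting the Hermitian tractor form $L(\tau)_{\mathscr{A}\mathscr{B}}$ once with $X^{\mathscr{A}}$ produces a cotractor whose slots are $\tau$ and $\frac{1}{2}\nabla_{b}\tau$, and a second contraction recovers the bottom slot, $L(\tau)_{\mathscr{A}\mathscr{B}}X^{\mathscr{A}}X^{\mathscr{B}}=\tau$. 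This bottom-slot identity is the bridge between the pointwise value of $\tau$ and $L(\tau)$ as a form on the tractor bundle, and it drives the whole argument. In the definite case it gives $\tau\neq 0$ everywhere, so $\mathcal{Z}(\tau)=\varnothing$; the stated metric $g_{bc}=(\delta^{i}_{(b}\delta^{j}_{c)}+J^{i}_{(b}J^{j}_{c)})P_{ij}$ is Hermitian for $J$ by construction, while its non-admissibility in general reflects that the Hermitian part of the Schouten tensor need not be quasi-K\"{a}hler.

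For the indefinite case I would first establish that $\mathcal{Z}(\tau)$ is a smooth separating hypersurface. If some $x\in\mathcal{Z}(\tau)$ had $d\tau(x)=0$, i.e. $\frac{1}{2}\nabla_{b}\tau=0$ at $x$, then $L(\tau)_{\mathscr{A}\mathscr{B}}X^{\mathscr{A}}$ would vanish at $x$ with $X\neq 0$, contradicting non-degeneracy of $L(\tau)$. Hence $d\tau\neq 0$ along $\mathcal{Z}(\tau)$, so by the regular value theorem it is smoothly embedded of codimension one, and $\tau$ changes sign transversally across it; setting $M_{\pm}=\{\pm\tau>0\}$ and $M_{0}=\mathcal{Z}(\tau)$ gives the decomposition of (i). For (ii), on $M_{\pm}$ the nonvanishing density $\tau$ determines the special connection $\nabla^{\tau}\in\bD$ characterized by $\nabla^{\tau}\tau=0$; since $\bD$ is defined on all of the closed manifold $M$ and $\tau$ is a defining density for $M_{0}$, Proposition \ref{cprojcompactnessprop}(2) applied to the compact manifold with boundary $M\setminus M_{\mp}=M_{\pm}\cup M_{0}$ exhibits it as a c-projective compactification of $(M_{\pm},\nabla^{\tau})$ with boundary $M_{0}$.

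For (iii), I would work in the scale $\tau$ on $M_{\pm}$, where $\lambda_{b}=\frac{1}{2}\nabla^{\tau}_{b}\tau=0$. By Corollary \ref{cproj3.3} the form $L(\tau)$ then block diagonalizes relative to the splitting $\mathcal{E}^{\mathscr{A}}=\langle X\rangle\oplus\operatorname{im}(W^{\mathscr{A}}_{a})$ into the blocks $\tau$ and $\frac{1}{2}\tau\,g_{bc}$; as $L(\tau)$ is non-degenerate and $\tau\neq 0$ on $M_{\pm}$, the block $g_{bc}$ is non-degenerate, so $g$ is a pseudo-Hermitian metric there, and without the parallel hypothesis it need not be quasi-K\"{a}hler, hence not admissible. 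If moreover $L(\tau)$ is parallel, the middle slot of $\nabla^{\mathcal{T}}L(\tau)=0$ reads, in this scale, $\phi_{bc}=\nabla^{\tau}_{b}\lambda_{c}+P_{bc}\tau=P_{bc}\tau$; since $\phi$ is valued in the Hermitian part this forces $P_{bc}=\frac{1}{2}g_{bc}$, the c-projective Einstein condition, while the bottom slot gives $\nabla^{\tau}\phi=0$ and hence $\nabla^{\tau}g=0$, so $g$ is admissible with canonical connection $\nabla^{\tau}$ and is K\"{a}hler--Einstein. Finally, (iv) is immediate: along the real hypersurface $M_{0}$ the tangent space $T_{x}M_{0}$ has odd real dimension $2m-1$ and so cannot be $J$-invariant, whence $H\colonequals TM_{0}\cap J(TM_{0})$ has constant rank $2m-2$ and $(H,J|_{H})$ is an almost CR structure of hypersurface type, with Levi form possibly degenerate since nothing here constrains it.

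The main obstacle is the hypersurface regularity in the second paragraph: the entire smooth-manifold structure of the decomposition rests on converting non-degeneracy of $L(\tau)$ into the transversality statement $d\tau\neq 0$ on $\mathcal{Z}(\tau)$ through the canonical tractor $X$, and this is precisely where the constant $G$-type hypothesis is indispensable. The only genuinely computational point is the identification $P_{bc}=\frac{1}{2}g_{bc}$ in the parallel case, which yields the K\"{a}hler--Einstein conclusion.
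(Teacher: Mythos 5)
Your proof is correct and follows essentially the same route as the paper's: non-degeneracy of $L(\tau)$ forces $\mathcal{Z}(\tau)\cap\mathcal{Z}(\nabla\tau)=\varnothing$ so the implicit function theorem gives the embedded separating hypersurface, part (ii) is Proposition \ref{cprojcompactnessprop} applied to the defining density $\tau$, part (iii) is read off from the form of $L(\tau)$ in the splitting $\nabla^{\tau}$, and part (iv) is the standard $H=TM_0\cap J(TM_0)$ construction. If anything, you supply slightly more detail than the paper does, namely the explicit $X$-contraction argument showing why $\tau$ and $\nabla\tau$ cannot vanish simultaneously (and why definiteness empties $\mathcal{Z}(\tau)$), and the middle-slot computation $\phi_{bc}=P_{bc}\tau$ yielding the Einstein condition $P_{bc}=\tfrac{1}{2}g_{bc}$, which the paper's proof leaves implicit.
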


\begin{proof}
(i) Since $L(\tau)$ is non-degenerate observe that $\mathcal{Z}(\tau)\cap\mathcal{Z}(\nabla\tau)=\varnothing$. The implicit function theorem implies then that $\mathcal{Z}(\tau)$ is a smoothly embedded real hypersurface, which is necessarily separating since $\nabla\tau\neq 0$ on $\mathcal{Z}(\tau)$. \\
(ii) Since $\tau$ is a defining density for $M_0$ it follows from Proposition \ref{cprojcompactnessprop} that $(M \backslash M_{\mp},J,\bD)$ are 
c-projective compactifications of $(M_{\pm}, J, \nabla^{\tau})$, with boundary $M_0$. \\
(iii) Away from $\mathcal{Z}(\tau)$, in the splitting $\nabla^{\tau}$, we see that $L(\tau) = (\tau,0,(\delta^i_{(b}\delta^j_{c)}+ J^i_{(b}J^j_{c)})P_{ij})$. Nondegeneracy of $L(\tau)$ implies that the Hermitian form $g_{bc}=(\delta^i_{(b}\delta^j_{c)}+ J^i_{(b}J^j_{c)})P_{ij}$ is itself non-degenerate, and hence a Hermitian metric, away from $\mathcal{Z}(\tau)$. If the bottom slot of $\nabla L(\tau)$ vanishes in the splitting $\nabla^{\tau}$, then $g$ is necessarily admissible and hence quasi-K\"{a}hler by our discussion in Section \ref{cprojcompactness}\footnote{ For details see Proposition 4.1 of \cite{CEMN} or Proposition 7 of \cite{CG4}.}.  In particular, if $L(\tau)$ is parallel, then $g$ is admissible. \\ 
(iv) 
Observe that, for $x \in M_0$, $H_x \colonequals T_xM_0 \cap J(T_xM_0)$ defines a corank one smooth distribution $H\subset TM_0$ and the pullback $i^*J$ of the almost complex structure along the inclusion $i:M_0 \hookrightarrow M$ defines an almost complex structure on $H$. Thus $(M_0^{2n-1},H,i^*J)$ is a (possibly degenerate) almost CR structure of hypersurface type. If $J$ is integrable then $i^*J$ is integrable. 
\end{proof}
Next we examine an analogous result in the dual case. 

\subsection{Degenerate solutions of the c-projective metrizability equation: the order 2 c-projective compactification case}

\begin{theorem}\label{cprojone}
Let $(M,J,\bD)$ be a almost c-projective manifold with real dimension $2m$ equipped with a solution $\ze \in \Gamma (\operatorname{Herm}(T^*M)\otimes \mathcal{E}(-1,-1)_{\mathbb{R}})$ of the metrizability equation such that $L(\ze)\in\Gamma(\mathcal{H}^*)$ is non-degenerate as a pseudo-Hermitian form on the cotractor bundle. If $L(\ze)$ is definite then the degeneracy locus $\mathcal{D}(\ze)$ is empty and $(M, J, \bD,\zeta)$ is
  a quasi-K\"{a}hler manifold with inverse Hermitian metric $g^{-1}=\operatorname{sgn}(\tau)\tau
  \ze$ where $\tau={\normalfont \bdet}(\ze)$. If $L(\ze)$ has signature $(p+1,q+1)$, with $p,q\geq 0$, then $\mathcal{D}(\ze)$ is either empty or it is a smoothly embedded separating real hypersurface such that the following hold: 
 \begin{enumerate}[(i)]
\item $M$ is stratified by the strict signature of $\ze$ as a (density
  weighted) Hermitian form on $T^*M$ with curved orbit decomposition given by
$$
M=  \coprod\limits_{i \in \{+,0,-\}} M_i
$$
 where $\zeta$ has signature $(p+1,q)$,
$(p,q+1)$,and $(p,q,1)$ on $TM$ restricted to $M_{+}$, $M_{-}$, and $M_0$, respectively. 
\item On $M_\pm$, $\zeta$ induces a quasi-K\"{a}hler metric $g_{\pm}$ with nonvanishing scalar curvature $R^{g_{\pm}}$, with the same signature as $\zeta$, with inverse $g^{-1}_{\pm} = \operatorname{sgn}(\tau)\tau \zeta|_{M_{\pm}}$ where $\tau={\normalfont \bdet}(\ze)$. 
\item If $M$ is closed, then the components $(M \backslash M_{\mp},J,\bD)$ are 
c-projective compactifications of $(M_{\pm}, J, \nabla^{\ze})$, with boundary $M_0$.
\item $M_0$ inherits a signature $(p,q)$ almost CR structure of hypersurface type.
\end{enumerate}
\end{theorem}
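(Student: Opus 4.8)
The plan is to read off all the geometric data from the single prolonged field $L(\ze)$ and the canonical filtration of the cotractor bundle, reducing the global statement to pointwise linear algebra of a non-degenerate Hermitian form together with the admissibility and compactness results already assembled. By Corollary \ref{cproj3.3} the top slot of $L(\ze)$ is $\ze$ itself, and from the cotractor sequence $0 \to \mathcal{E}_a \otimes \mathcal{E}(1,0) \xrightarrow{Z^a_{\mathscr{A}}} \mathcal{E}_{\mathscr{A}} \xrightarrow{X^{\mathscr{A}}} \mathcal{E}(1,0) \to 0$ this slot is exactly the restriction of the Hermitian form $L(\ze)$ to the canonical complex hyperplane $\operatorname{im}(Z^a_{\mathscr{A}}) = \ker(X^{\mathscr{A}})$. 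First I would record the elementary fact that restricting a non-degenerate Hermitian form of signature $(p+1,q+1)$ to a hyperplane yields signature $(p+1,q)$, $(p,q+1)$, or $(p,q,1)$ according as the orthocomplementary line is negative, positive, or null; that line is the canonical line dual to $X^{\mathscr{A}}$, and its squared length is the real density $\varrho := L(\ze)^{-1}_{\mathscr{AB}} X^{\mathscr{A}} X^{\mathscr{B}}$. In the definite case $L(\ze)$ restricts to a definite, hence non-degenerate, form on every subspace, so $\ze$ is non-degenerate throughout, $\mathcal{D}(\ze)=\varnothing$, and $\ze$ is the inverse of an admissible (equivalently quasi-K\"{a}hler, by \cite{CEMN,CG4}) Hermitian metric, i.e. $g^{-1} = \operatorname{sgn}(\tau)\tau\,\ze$ with $\tau={\normalfont\bdet}(\ze)$.

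For the indefinite case I would next identify the strata. By the cofactor (adjugate) expansion of $L(\ze)^{-1}$ along the $X$-direction, $\varrho$ equals the minor of $L(\ze)$ on $\operatorname{im}(Z^a_{\mathscr{A}})$ divided by $\det(L(\ze))$, and up to a nonzero universal constant this minor is exactly $\tau={\normalfont\bdet}(\ze)$, so $\varrho = c\,\tau/\det(L(\ze))$ with $\det(L(\ze))$ nowhere zero of locally constant sign. Hence $\operatorname{sgn}(\varrho)=\pm\operatorname{sgn}(\tau)$, the degeneracy locus is $\mathcal{D}(\ze)=\mathcal{Z}(\tau)=\mathcal{Z}(\varrho)$, and by the dichotomy above $M$ is partitioned by the sign of $\varrho$ into the open strata $M_\pm$, on which $\ze$ has signature $(p+1,q)$ and $(p,q+1)$, and $M_0=\mathcal{Z}(\varrho)$, on which $\ze$ has signature $(p,q,1)$, which is (i). To see that $M_0$ is a smoothly embedded separating hypersurface I would show, dually to the proof of Theorem \ref{cprojdensitynon-degenerate}(i), that $\nabla\tau$ is nowhere zero on $\mathcal{Z}(\tau)$. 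Writing $\nabla_c\tau$ through the adjugate of $\ze$, which has rank one along $M_0$, namely $\omega_a\omega_b$ for $\omega_a$ spanning the one-dimensional kernel of $\ze|_{M_0}$, this reduces to $\omega_a\omega_b\nabla_c\ze^{ab}\neq0$ for some $c$; I would extract this from non-degeneracy of $L(\ze)$, using that $L(\ze)^{-1}(X^{\mathscr{A}},W^{\mathscr{B}}_c)$ cannot vanish for all $c$ when $X$ is $L(\ze)^{-1}$-null, since $X^{\mathscr{A}}$ and the $W^{\mathscr{A}}_c$ together span the tractor bundle. The implicit function theorem together with the sign change of $\tau$ then yields a smooth separating hypersurface.

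Part (ii) is then immediate: on each $M_\pm$ the tensor $\ze$ is non-degenerate of the stated signature, so $g^{-1}_\pm=\operatorname{sgn}(\tau)\tau\,\ze|_{M_\pm}$ is a genuine Hermitian metric, admissible and hence quasi-K\"{a}hler by \cite{CEMN,CG4}, and its scalar curvature $R^{g_\pm}$ is a nonzero constant multiple of $\det(L(\ze))$ by the Scalar curvature subsection, which is nowhere zero by hypothesis. For (iii), note that $\tau={\normalfont\bdet}(\ze)\in\Gamma(\mathcal{E}(1,1)_{\mathbb{R}})$ is parallel for the canonical connection $\nabla^{\ze}$ of $g_\pm$, being a power of the metric volume density, and is a defining density for $M_0$ by the non-vanishing of $\nabla\tau$; since $\nabla^{\ze}$ is special and the c-projective structure $\bD$ is defined across $M_0$, Proposition \ref{cprojcompactnessprop}(2) shows $\nabla^{\ze}$ is c-projectively compact of order $2$ on $M\setminus M_\mp$, so these are c-projective compactifications of $(M_\pm,J,\nabla^{\ze})$ with boundary $M_0$.

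Finally, (iv) follows the pattern of Theorem \ref{cprojdensitynon-degenerate}(iv): $H := TM_0 \cap J(TM_0)$ is a corank-one distribution on which $i^*J$, for $i:M_0\hookrightarrow M$, restricts to an almost complex structure, giving an almost CR structure of hypersurface type, integrable when $J$ is. The new content is that this structure is non-degenerate of signature $(p,q)$, which is where I expect the main difficulty. One must show that the one-dimensional kernel of $\ze|_{M_0}$ is transverse to $H$ and that the Levi form on $H$ inherits precisely the $(p,q)$ block of $\ze$; I would obtain this from the fibrewise structure of $L(\ze)$ along $M_0$, the $L(\ze)^{-1}$-null direction $X$ projecting to the conormal of $M_0$, together with the compatibility, established in \cite{CG4}, between the boundary Levi form and the metrics $g_\pm$ on the adjacent open strata. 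The conceptual heart is the translation between the signature of $\ze$ on $TM$ and that of $L(\ze)$ on the cotractor bundle via $\varrho$; the genuinely technical steps are controlling the $\nabla L(\ze)$ contribution to $\nabla\tau$ on $M_0$ and pinning down the Levi signature as exactly $(p,q)$.
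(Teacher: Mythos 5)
Your parts (i)--(iii) are essentially the paper's own proof in different notation: your $\varrho \colonequals L(\ze)^{-1}_{\mathscr{AB}}X^{\mathscr{A}}X^{\mathscr{B}}$ is precisely the top slot $\hat{\tau}$ of the inverse tractor $\Phi_{\mathscr{AB}}=(L(\ze))^{-1}$ that the paper works with, your cofactor identity $\varrho = c\,\tau/\det(L(\ze))$ is the paper's observation that $\hat{\tau}=\bdet(\ze)/\det(h^{\mathscr{AB}})$ up to a nonzero constant, and your (ii) and (iii) coincide with the paper's (admissibility/quasi-K\"ahler via \cite{CEMN,CG4}, the scalar curvature identification, and Proposition \ref{cprojcompactnessprop} applied to the defining density $\tau$). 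One step in (i) is thinner than you present it: non-degeneracy does give $\hat{\eta}_c=\Phi_{\mathscr{AB}}X^{\mathscr{A}}W^{\mathscr{B}}_c\neq 0$ on $\mathcal{D}(\ze)$, but passing from this to $\omega_a\omega_b\nabla_c\ze^{ab}\neq 0$ is not a spanning argument alone. You need the identity \eqref{kroneckerdelta} along $\mathcal{D}(\ze)$ to see that $\ze^{ab}\hat{\eta}_b=0$ and $\lambda^a\hat{\eta}_a=1$ (so $\hat{\eta}$ spans $\ker\ze$ and pairs non-trivially with $\lambda^a=-\frac{1}{m}\nabla_b\ze^{ab}$), and then the metrizability equation \eqref{cprojmeteq} to rewrite $\nabla_c\ze^{ab}$ in terms of $\lambda$. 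The paper packages exactly this by differentiating \eqref{kroneckerdelta} and deducing $\nabla_i\hat{\tau}=2\hat{\eta}_i$ along $\mathcal{D}(\ze)$, see \eqref{twoslots}; your sketch asserts the implication without the argument, though it is fillable.

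The genuine gap is (iv). The claim that $M_0$ is Levi-nondegenerate of signature exactly $(p,q)$ is the technical heart of the theorem, and you do not prove it: you defer it to an unnamed ``compatibility established in \cite{CG4}'' without identifying a result or verifying its hypotheses, and you yourself flag it as the main difficulty. What must be shown, and what the paper proves, is (a) that along $M_0$ the kernel of $\ze$ is spanned by $\nabla_a\hat{\tau}$ and $J^i_a\nabla_i\hat{\tau}$, i.e.\ it is precisely the annihilator of $H=TM_0\cap J(TM_0)$ --- note your phrase ``the kernel of $\ze|_{M_0}$ is transverse to $H$'' misstates this, since the kernel consists of covectors and equals $\operatorname{Ann}(H)$ --- so that $\ze$ descends to a signature-$(p,q)$ Hermitian metric on $H$; and (b) that this induced form agrees, after inversion and composition with $J$, with the Levi form of the contact form $\theta_a=J^i_a\nabla_i\hat{\tau}$. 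The paper obtains both by evaluating the slots of \eqref{kroneckerdelta} along $\mathcal{D}(\ze)$ in a specially constructed scale (the \emph{special boundary scale} $\gamma=f+\xi\hat{\tau}$, chosen so that $Y^{\gamma}$ is null along $M_0$), which yields the system \eqref{firstone}--\eqref{fourthone}; from \eqref{firstone} one gets (a), from \eqref{fourthone} one gets $(\ze|_H)^{-1}=\frac{1}{2}\nabla\nabla\hat{\tau}$ and hence (b), and \eqref{thirdone} identifies the Reeb field. None of this construction, or any substitute for it, appears in your proposal. Finally, the alternative route you gesture at (use (ii), (iii) and the CR-boundary results of \cite{CG4}) is not absurd --- the paper itself presents its argument as an alternative to \cite{CG4} --- but as written it is circular in spirit and incomplete in fact: (iii) is stated only for closed $M$ while (iv) is asserted unconditionally, so you would at least need to localize the compactness argument near $M_0$ and then check that the hypotheses of the specific theorem of \cite{CG4} you intend to invoke (in particular its scalar-curvature asymptotics and signature bookkeeping) are met.
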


\begin{proof}
(i) Let $\Phi_{\mathscr{AB}} \colonequals (h^{\mathscr{AB}})^{-1}$ denote the pointwise inverse of $h^{\mathscr{AB}}=L(\zeta^{ab})$. Given a splitting, say $\nabla\in \bD$, we write $h$ and $\Phi$ as

\begin{align}
h^{\mathscr{AB}}  = 
\left(
\begin{array}{cc}
\ze^{ab}  \\
\lambda^a \\
\rho \\
\end{array} 
\right) 
\ \ \ \text{\normalfont and} \ \ \ 
{\Phi_{\mathscr{AB}}} = 
\left(
\begin{array}{cc}
\hat{\tau}  \\
\hat{\eta} _{a}\\
\hat{\phi}_{ab} \\
\end{array} 
\right),
\end{align}
for smooth sections $\rho \in \Gamma (\mathcal{E}(-1,-1)_{\mathbb{R}})$, $\lambda^a
\in \Gamma (\mathcal{E}^a(-1,-1)_{\mathbb{R}})$, $\ze^{ab} \in \Gamma
(\operatorname{Herm}(T^*M)\otimes\mathcal{E}(-1,-1)_{\mathbb{R}})$, $\hat{\tau} \in \Gamma (\mathcal{E}(1,1)_{\mathbb{R}})$, $\hat{\eta}_a
\in \Gamma (\mathcal{E}_a(1,1)_{\mathbb{R}})$, and $\hat{\phi}_{ab} \in \Gamma
(\operatorname{Herm}(TM)\otimes\mathcal{E}(1,1)_{\mathbb{R}})$. Observe that, up to a non-zero constant, $\hat{\tau}=\frac{\operatorname{\bdet}(\ze^{ab})}{\operatorname{det}(h^{\mathscr{AB}})}$.\\
By definition we have 
\begin{align}\label{kroneckerdelta}
\Phi_{\mathscr{AC}}h^{\mathscr{CB}} = \delta^{\mathscr{B}}_{\mathscr{A}}.
\end{align}
Applying the tractor connection, $\nabla^{\mathcal{T}}_{i}$, to both sides gives
\begin{align*}
-(\nabla^{\mathcal{T}}_{i} \Phi_{\mathscr{AC}})h^{\mathscr{CB}}=  \Phi_{\mathscr{AC}} \nabla^{\mathcal{T}}_{i} h^{\mathscr{CB}}.
\end{align*}
Applying $\Phi_{\mathscr{BD}}$ to each side gives
\begin{align}\label{three}
-\nabla^{\mathcal{T}}_{i} \Phi_{\mathscr{AD}} = \Phi_{\mathscr{AC}} (\nabla^{\mathcal{T}}_{i}h^{\mathscr{CB}}) \Phi_{\mathscr{BD}} .
\end{align}
Computing, the top two slots of \eqref{three} are given by
\begin{align*}
 2\hat{\eta}_i - \nabla_{i} \hat{\tau} & = \hat{\tau}^2\beta_i-2\hat{\tau}\hat{\eta}_b\alpha^b_i \\
 \hat{\phi}_{ic}-\nabla_i\hat{\eta}_c - P_{ic}\hat{\tau} & = \hat{\tau}\hat{\eta}_c\beta_i + \hat{\tau}\hat{\phi}_{jc}\alpha_i^j + \hat{\eta}_c\alpha_i^j\hat{\eta}_j
\end{align*}
where $\nabla_i^{\mathcal{T}}h^{\mathscr{CB}}=(0,\alpha_i^b,\beta_i)^t$. On $\mathcal{D}(\ze)=\mathcal{Z}(\hat{\tau})$ these reduce to 
\begin{align}\label{twoslots}
\nabla_{i} \hat{\tau} &= 2\hat{\eta}_i \\
\hat{\phi}_{ic} &= \hat{\eta}_c\alpha_i^j\hat{\eta}_j+ \nabla_i\hat{\eta}_c. 
\end{align}
It follows that, on $\mathcal{D}(\ze)$, $\Phi$ has the form
\begin{align}
{\Phi_{\mathscr{AB}}} = 
\left(
\begin{array}{cc}
\hat{\tau}  \\
\frac{1}{2}\nabla_c\hat{\tau}\\
\frac{1}{2}\nabla_a\nabla_b\hat{\tau} + \frac{1}{4}(\nabla_b\hat{\tau}) (\nabla_i\hat{\tau}) \alpha_a^i\\
\end{array} 
\right).
\end{align}

Nondegeneracy of $\Phi$ implies that $\mathcal{Z}(\hat{\tau})\cap\mathcal{Z}(\nabla\hat{\tau})=\varnothing$ whence we conclude that $\mathcal{Z}(\hat{\tau})$ is a smoothly embedded real hypersurface that is necessarily separating. 

(ii) On the open orbits $M_{\pm}$ the complex connection $\nabla^{\ze}\in \bD$ preserving $\ze$ necessarily preserves $\operatorname{\bdet}(\ze)$, and hence preserves  the pseudo-Riemannian metric $g^{-1}_{\pm} = \operatorname{sgn}(\tau)\tau \zeta|_{M_{\pm}}$. Since $\ze$ is Hermitian it follows that $g_{\pm}$ is Hermitian and hence admissible. As observed in Section \ref{admissiblemetrics}, a Hermitian pseudo-Riemannian metric, on an almost complex manifold, is admissible if and only if it is quasi-K\"{a}hler. \\
On $M_{\pm}$ in the splitting $\nabla^{\ze}$ we see that $L(\ze) = (\operatorname{sgn}(\tau)\tau^{-1}g^{-1}_{\pm}, 0, \frac{1}{4m(m+1)}\operatorname{sgn}(\tau)\tau^{-1}R^{g_{\pm}})^t$. Nondegeneracy of $L(\ze)$ implies that the scalar curvature $R^{g_{\pm}}$ is nonvanishing.  

(iii) Since the $(1,1)_{\mathbb{R}}$-density $\tau=\operatorname{\bdet}(\ze)$ is a defining density for $M_0$ and it is necessarily preserved by $\nabla^{\ze}$, it follows from Proposition \ref{cprojcompactnessprop} that the components $(M \backslash M_{\mp},J,\bD)$ are 
c-projective compactifications of $(M_{\pm}, J, \nabla^{\ze})$, with boundary $M_0$. 

(iv) Observe that, for $x \in M_0$, $H_x \colonequals T_xM_0 \cap J(T_xM_0)$ defines a corank one smooth distribution $H\subset TM_0$ and the pullback $i^*J$ of the almost complex structure along the inclusion $i:M_0 \hookrightarrow M$ defines an almost complex structure on $H$. Thus $(M_0^{2n-1},H,i^*J)$ is a (possibly degenerate) almost CR structure of hypersurface type. If $J$ is integrable then $i^*J$ is integrable.

Since $\nabla_a \tau$ is conormal to $TM_0\subset TM$ it follows that $J^a_b\nabla_a\tau$ is conormal to $J(TM_0) \subset TM$ so that $\nabla_a\tau \perp H$ and $J^a_b\nabla_a\tau \perp H$. 
 Now we show that when $\ze$ degenerates, its nullity is pointwise spanned by $\nabla_a \tau$ and $J_b^a\nabla_a \tau$. Since $\ze$ is Hermitian we need only show $\nabla_a\tau$ is in the nullity of $\ze$ and it will follow that $J_b^a\nabla_a \tau$ is in the nullity as well.

Now we show that there exists a scale such that $Y$ is null, i.e. $H^{\mathscr{A}\mathscr{B}}Y_{\mathscr{A}}Y_{\mathscr{B}}=0$, along $M_0$. Given $f$ a non-vanishing section of $\mathcal{E}(1,1)$, then in the scale corresponding to $f$, we have that 
\begin{align*}
f^{-1}D_{\mathscr{A}}f = f^{-1}Y_{\mathscr{A}}f + f^{-1}Z^{a}_\mathscr{A}\nabla_{a}f=Y_{\mathscr{A}}.
\end{align*}
Observing that, on $\mathcal{D}(\ze)$,
\begin{align*}
D_{\mathscr{A}}\hat{\tau} = Y_{\mathscr{A}}\hat{\tau} +  Z^{a}_{\mathscr{A}}\nabla_{a}\hat{\tau} = 2\Phi_{\mathscr{A}\mathscr{B}}X^{\mathscr{B}},
\end{align*}
This implies that along $\mathcal{D}(\ze)$
\begin{align*}
h^{\mathscr{A}\mathscr{B}}(D_\mathscr{A}\hat{\tau})(D_{\mathscr{B}}\hat{\tau}) = 4h^{\mathscr{A}\mathscr{B}}\Phi_{\mathscr{A}\mathscr{D}}X^{\mathscr{D}}\Phi_{\mathscr{B}\mathscr{C}}X^{\mathscr{C}}=4\Phi_{\mathscr{A}\mathscr{D}}X^{\mathscr{D}}X^{\mathscr{A}}=\hat{\tau}=0.
\end{align*}
Also, observe that
\begin{align*}
\frac{1}{2}h^{\mathscr{A}\mathscr{C}}D_{\mathscr{A}}\hat{\tau}=X^{\mathscr{C}}.
\end{align*}
Define $\xi \colonequals -\frac{1}{4}f\rho$ where $f$ is an arbitrary non-vanishing section of $\mathcal{E}(1,1)$ and $\rho \colonequals Y_\mathscr{A}Y_{\mathscr{B}} h^{\mathscr{A}\mathscr{B}}$. Now let $\gamma \colonequals f + \xi \hat{\tau}$. Then along $\mathcal{D}(\ze)$ we have
\begin{align*}
h^{\mathscr{A}\mathscr{B}}Y^{\gamma}_{\mathscr{A}}Y^{\gamma}_{\mathscr{B}} = &\ \gamma^{-2}(D_{\mathscr{A}}\gamma)({D_{\mathscr{B}}\gamma})h^{\mathscr{A}\mathscr{B}} \\
= &\ f^{-2}(D_{\mathscr{A}}f + \xi D_{\mathscr{A}}\hat{\tau})({D_{\mathscr{B}}f }+ \xi {D_{\mathscr{B}}\hat{\tau} }) h^{\mathscr{A}\mathscr{B}} \\
 = &\ f^{-2}(D_\mathscr{A}f)({D_{\mathscr{B}}f})h^{\mathscr{AB}} + f^{-2}\xi (D_{\mathscr{A}}\hat{\tau})({D_{\mathscr{B}}f})h^{\mathscr{AB}} + f^{-2}\xi (D_{\mathscr{A}}f)({D_{\mathscr{B}}\hat{\tau}})h^{\mathscr{AB}} \\
 &\ + f^{-2}\xi^{2}(D_{\mathscr{A}}\hat{\tau})({D_{\mathscr{B}}\hat{\tau}})h^{\mathscr{AB}} \\
 = &\ Y_{\mathscr{A}}Y_{\mathscr{B}}h^{\mathscr{A}\mathscr{B}} + 2f^{-2}\xi X^{\mathscr{B}}({D_{\mathscr{B}}f}) + 2f^{-2}\xi X^{\mathscr{A}}(D_{\mathscr{A}}f) + 0 \\
= &\ \rho + 4f^{-1}\xi \\
= &\ \rho - \rho \\
= &\ 0.
\end{align*}
A scale $\nabla^\gamma$ preserving $\gamma$ such that $Y^{\gamma}$ is null along along $\mathcal{D}(\ze)$ will be known as a {\em special boundary scale}.\footnote{This term and the method of constructing a special boundary scale was first done the setting of projective differential geometry by Sam Porath in \cite{GovPorath}.} In such a scale, $\rho$ necessarily vanishes along $\mathcal{D}(\ze)$.

Computing the slots of \ref{kroneckerdelta} along $\mathcal{D}(\ze)$ in a special boundary scale yields the following system of equations
\begin{align}
\label{firstone} & \frac{1}{2}(\nabla_a\hat{\tau})\ze^{ab} =0 \\ 
\label{secondone} &\frac{1}{2}(\nabla_c\nabla_a\hat{\tau})\nabla_j\ze^{jc}  +\frac{1}{4}(\nabla_a\hat{\tau})(\nabla_i\hat{\tau})\alpha_c^i\nabla_j\ze^{jc} = 0 \\
\label{thirdone} &\frac{1}{2m}(\nabla_c\hat{\tau})(\nabla_i\ze^{ic}) =1 \\
\label{fourthone} &\frac{1}{2m}(\nabla_a\hat{\tau})(\nabla_i\ze^{ib}) + \frac{1}{2}(\nabla_c\nabla_a\hat{\tau})\ze^{cb} +\frac{1}{4}(\nabla_a\hat{\tau})(\nabla_i\hat{\tau})\alpha_c^i\nabla_j\ze^{jc}  = \delta^b_a.
\end{align}

It follows from \eqref{firstone} that the kernel of $\ze$ on $\mathcal{D}(\ze)$ is spanned by $\nabla_a\hat{\tau}$ and $J_b^a\nabla_a\hat{\tau}$. So $\ze$ induces a pseudo Hermitian metric with signature $(p,q)$ on the distribution $H$. Contracting $\xi^a \in \Gamma(H)$ into \eqref{secondone} gives us

\begin{align}\label{reebeq} 
\frac{1}{2}(\nabla_c\nabla_a\hat{\tau})(\nabla_j\ze^{jc})\xi^a=0.
\end{align} 

The restriction of $\theta_a\colonequals J_a^i\nabla_i\hat{\tau}$ to $TM_0$ is a weighted contact form for the CR structure induced on $\mathcal{D}(\ze)$. Then \eqref{reebeq} together with \eqref{thirdone} this implies that $T^a\colonequals-\frac{1}{2m}J^a_j\nabla_i\ze^{ij}$ is a candidate for the Reeb vector field since $\theta_a T^a=1$. 

Contracting $\xi^a\in \Gamma(H)$ into \eqref{fourthone} gives us
\begin{align*}
\frac{1}{2}(\nabla_c\nabla_a\hat{\tau})\ze^{cb}\xi^a=\delta^b_a\xi^a=\xi^b,
\end{align*}
whence $(\ze|_H)^{-1}=\frac{1}{2}\nabla\nabla\hat{\tau}$. Since  and the real part of the CR Levi form corresponding to $\theta_a$ is the restriction of $(d\theta)_{ab}=2\nabla_{[a}J_{b]}^i\nabla_i\hat{\tau}$ to $H$, it follows immediately that the real part of the CR Levi form is $(J^a_b\check{\ze}^{bc})^{-1}$, where $\check{\ze}\colonequals \ze|_H$. Thus the CR structure is Levi-non-degenerate with signature $(p,q)$ and, since $T^a(d\theta)_{ab}=0$, we see that $T^a$ is indeed the Reeb. 
\end{proof}

\begin{remark}
If the original almost complex structure is integrable then the open orbits are in fact pseudo-K\"{a}hler and the c-projective Schouten tensor $P_{ab}$ is symmetric and Hermitian, so by Theorem 23 of \cite{CG4}, $g_{\pm}$ satisfies an asymptotic version of the Einstein equation. If $L(\ze)$ is parallel then the vanishing of the middle slot of $\nabla L(\ze)$ implies that the quasi-K\"{a}hler metrics on the open orbits are Einstein. If $J$ is integrable and $L(\ze)$ is parallel then the open orbits are pseudo-K\"{a}hler-Einstein.
\end{remark}

\begin{corollary}
Let $(\overline{M},J)$ be a connected almost complex manifold with boundary $\partial M$ and interior $M$, equipped with a pseudo-quasi-K\"{a}hler metric $g$ on $M$ which is Hermitian for $J$, has nonvanishing scalar curvature, and such that the minimal complex connection $\nabla^{g}$  preserving $g$ does not extend to any neighborhood of a boundary point, but the almost c-projective structure $\bD \colonequals [\nabla^g]$ does extend to the boundary. Let $\tau \colonequals \operatorname{vol}(g)^{-\frac{1}{m+1}}$. Then $\ze^{ab}\colonequals \tau^{-1}g^{ab}$ extends to the boundary. If $L(\ze^{ab})$ is non-degenerate on $\overline{M}$, then $(M,J,\nabla^{g})$ is c-projectively compact.
\end{corollary}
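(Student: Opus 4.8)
The plan is to reduce the statement to Proposition~\ref{cprojcompactnessprop}(2) by exhibiting $\tau=\vol(g)^{-\frac{1}{m+1}}$ as a parallel defining density for $\partial M$. First I would settle the extension claims. Since the c-projective structure $\bD=[\nabla^g]$ extends smoothly to $\overline{M}$, the discussion following Corollary~16 of \cite{CG4} (recorded in the scalar curvature subsection) shows that the metrizability solution $\ze^{ab}=\tau^{-1}g^{ab}$ extends smoothly to $\overline{M}$; because the splitting operator $L$ is c-projectively invariant and $\bD$ is smooth up to the boundary, the prolongation $L(\ze)\in\Gamma(\mathcal{H}^*)$ extends smoothly as well. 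As $\tau$ agrees up to a nonzero constant with $\bdet(\ze)$, which is polynomial in $\ze$, the density $\tau$ also extends smoothly to $\overline{M}$.

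Next I would identify the zero locus of $\tau$ with the boundary. On the interior $\tau=\vol(g)^{-\frac{1}{m+1}}$ is nowhere zero, so $\mathcal{Z}(\tau)\subseteq\partial M$. For the reverse inclusion --- and this is the step where the hypotheses are genuinely used --- suppose $\tau(x)\neq 0$ at some $x\in\partial M$. Then $\bdet(\ze)\neq 0$ near $x$, so $\ze$ is non-degenerate there, and the inverse metric $g^{ab}=\operatorname{sgn}(\tau)\tau\,\ze^{ab}$ extends to a smooth non-degenerate Hermitian metric on a neighborhood of $x$. Its canonical (minimal complex) connection, being determined by $g$ and its first derivatives, would then extend smoothly to that neighborhood, contradicting the assumption that $\nabla^g$ extends to no neighborhood of a boundary point. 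Hence $\tau$ vanishes on all of $\partial M$, giving $\mathcal{Z}(\tau)=\partial M$.

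To promote $\tau$ to a genuine defining density I would invoke the non-degeneracy of $L(\ze)$. Writing $\Phi=(L(\ze))^{-1}$ as in the proof of Theorem~\ref{cprojone}(i), its top slot $\hat{\tau}$ equals $\tau$ divided by the nowhere-zero function $\det(L(\ze))$, up to a nonzero constant; the argument there gives $\mathcal{Z}(\hat{\tau})\cap\mathcal{Z}(\nabla\hat{\tau})=\varnothing$, and along $\mathcal{Z}(\tau)$ one has $\nabla\hat{\tau}=\det(L(\ze))^{-1}\nabla\tau$ up to the same constant, so $\nabla\tau\neq 0$ on $\partial M$. Thus $\tau$ is a defining density for $\partial M$, and it is parallel on $M$ for $\nabla^g$ since $\tau=\vol(g)^{-\frac{1}{m+1}}$ is a root of the $\nabla^g$-parallel volume form.

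Finally I would assemble the pieces. The connection $\nabla^g$ is special, witnessed by the parallel density $\tau\in\Gamma(\mathcal{E}(1,1)_{\mathbb{R}})$; the c-projective structure determined by $\nabla^g$ extends to $\overline{M}$ by hypothesis; and $\tau$ is a parallel defining density for $\partial M$ by the previous steps. Proposition~\ref{cprojcompactnessprop}(2) then yields that $\nabla^g$ is c-projectively compact of order $2$, i.e.\ $(M,J,\nabla^g)$ is c-projectively compact. I expect the main obstacle to be the identification $\mathcal{Z}(\tau)=\partial M$: everything else is bookkeeping on top of the cited extension and compactness results, but that step is precisely where the non-extendability of $\nabla^g$ must be converted into the vanishing of $\tau$ along the entire boundary.
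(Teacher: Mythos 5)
Your proof is correct and follows the same skeleton as the paper's: extend the solution to the boundary, convert the non-extendability of $\nabla^g$ into the statement that the degeneracy locus of $\zeta$ (equivalently the zero locus of $\tau$) is exactly $\partial M$, and conclude compactness from the parallel defining density. The differences are in the packaging. The paper extends $L(\zeta)$ first, by parallel transport for the prolongation connection, and then projects to the quotient bundle to extend $\zeta$ (citing Corollary 16 of \cite{CG4} only as a cross-reference), whereas you extend $\zeta$ directly via the recorded consequence of that corollary and then apply the invariant differential operator $L$ to the extension; both are legitimate, and the paper's mechanism is essentially what underlies the cited corollary. More substantively, the paper finishes by invoking Theorem \ref{cprojone} wholesale, which is stated for manifolds without boundary, so its application to $\overline{M}$ is slightly loose; your route unfolds exactly the parts of that theorem that are needed --- the defining-density argument from its part (i) and the appeal to Proposition \ref{cprojcompactnessprop}(2) from its part (iii) --- and Proposition \ref{cprojcompactnessprop} is stated for manifolds with boundary, so your assembly is, if anything, cleaner. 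One point you should make explicit: to reuse the computation from Theorem \ref{cprojone}(i) (which rests on the vanishing of the top slot of $\nabla^{\mathcal{T}}L(\zeta)$), you need the extended $\zeta$ to solve the metrizability equation on all of $\overline{M}$, not just on $M$; this follows by continuity, since the equation is a closed condition holding on the dense interior and all data are smooth up to the boundary.
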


\begin{proof}
$L(\zeta)$ is defined on the interior $M$. It extends via parallel transport for the prolongation connection\footnote{The prolongation connection is a natural modification of the tractor connection. See, e.g., \cite{HSSS} for a construction for general BGG operators} to a parallel (for the prolongation connection) tractor on $\overline{M}$. Projection to the quotient bundle $\operatorname{Herm}(T^*M) \otimes \mathcal{E}(-1,-1)_{\mathbb{R}}$ gives a smooth extension of $\zeta$ to all of $\overline{M}$ ({\em cf.} Corollary 16 of \cite{CG4}). The degeneracy locus of $\zeta$ is precisely $\partial M$, otherwise it would contradict our assumption that $\nabla^g$ does not extend to any neighborhood of a boundary point. Then the result follows from Theorem \ref{cprojone}.
\end{proof}

\subsection{The Model}\label{model3}
We briefly discuss here the model for the structures considered in
Theorem \ref{cprojone}. The standard homogeneous model for c-projective
geometry is the complex projective space arising as the complex scalar projectivization,
$\mathbb{CP}^{m}=\mathbb{P}(\mathbb{C}^{m+1})$, of $\mathbb{C}^{m+1}$. The J-planar curves in $\mathbb{CP}^m$
are the smooth (real) curves lying in linearly embedded complex curves $\mathbb{CP}^1 \hookrightarrow \mathbb{CP}^m$. On $\mathbb{CP}^m$ the group
$G=SL(\mathbb{C}^{m+1})$ acts transitively. On this c-projective
structure it is well known that the tractor connection is induced by the trivial connection on $\mathbb{C}^{m+1}$. Now suppose we fix, on
$\mathbb{C}^{m+1}$, a non-degenerate symmetric bilinear form $h$ of
signature $(p+1,q+1)$. This may be identified with a corresponding
parallel tractor.  In $G$ consider the subgroup $H:=SU(h)\cong
SU(p+1,q+1)$ fixing $h$ (so $p+q=m-1$). This acts on the complex projective space $\mathbb{CP}^m$ but now with orbits parametrized by the strict sign of
$h(X,X)$ where $X$ denotes the homogeneous co\"{o}rdinates of a given
point on $\mathbb{CP}^m$. Complex projective space $\mathbb{CP}^m$ equipped with this action
of $H$ and accompanying orbit decomposition is the model for the
structure discussed in Theorem \ref{cprojone}. This follows easily from the
tractor approach that we use with the interpretation of the tractor
bundles over the homogeneous space $G/P$. So the Theorem also
reveals, for this model, the general features of the orbits and the
geometries thereon. In fact, $h^{-1}=L(\zeta)$ where $\zeta$ is the
corresponding solution of (\ref{cprojmeteq}) and, in the language of
\cite{CSlov}, this is a holonomy reduction of a flat Cartan geometry
(namely $G\rightarrow \mathbb{CP}^m$). Turning this around, we see that
Theorem \ref{cprojone} shows that solutions $\zeta$ of equation
(\ref{cprojmeteq}), satisfying that $\det (L(\zeta)) $ is nowhere zero,
provide well behaved curved generalizations of this model even though
$\zeta$ is not required to be normal (i.e. $L(\zeta)$ is not required
to be parallel).

\end{document}